\begin{document}

\title{Dual $\pi$-Rickart Modules}
\author{B. Ungor}
\address{Burcu Ungor, Department of Mathematics, Ankara University, Turkey}
\email{bungor@science.ankara.edu.tr}

\author{Y. Kurtulmaz}
\address{Yosum Kurtulmaz, Department of Mathematics, Bilkent University,  Turkey}
\email{yosum@fen.bilkent.edu.tr}

\author{S. Hal\i c\i oglu}
\address{Sait Hal\i c\i oglu,  Department of Mathematics, Ankara University, Turkey}
\email{halici@ankara.edu.tr}

\author{A. Harmanci}
\address{Abdullah Harmanci, Department of Maths, Hacettepe University, Turkey}
\email{harmanci@hacettepe.edu.tr}

\date{}
\newtheorem{thm}{Theorem}[section]
\newtheorem{lem}[thm]{Lemma}
\newtheorem{prop}[thm]{Proposition}
\newtheorem{cor}[thm]{Corollary}
\newtheorem{exs}[thm]{Examples}
\newtheorem{defn}[thm]{Definition}
\newtheorem{nota}{Notation}
\newtheorem{rem}[thm]{Remark}
\newtheorem{ex}[thm]{Example}

\begin{abstract}

Let $R$ be an arbitrary ring with identity and $M$ a right
$R$-module with $S =$ End$_R(M)$. In this paper  we  introduce
dual $\pi$-Rickart modules as a generalization of $\pi$-regular
rings  as well as that of dual Rickart modules. The module $M$ is
said to be {\it dual $\pi$-Rickart} if for any $f\in S$, there
exist $e^2=e\in S$ and a positive integer $n$ such that
Im$f^n=eM$. We prove that some results of dual Rickart modules can
be extended to dual $\pi$-Rickart modules for this general
settings. We investigate  relations between a dual $\pi$-Rickart
module and its endomorphism ring.

 \vspace{2mm}
\noindent {\bf2010 MSC:}  13C99, 16D80, 16U80

 \noindent {\bf Key
words}:  $\pi$-Rickart modules, dual $\pi$-Rickart modules,
Fitting modules, generalized left principally projective rings,
$\pi$-regular rings.
\end{abstract}

\maketitle

\section{Introduction}

\indent\indent Throughout this paper $R$ denotes an associative
ring with identity, and modules are unitary right $R$-modules.
 For a module $M$,  $S=$ End$_R(M)$ is the ring of all right $R$-module endomorphisms of
$M$. In this work, for the $(S, R)$-bimodule $M$,  $l_S(.)$ and
$r_M(.)$ are the left annihilator of a subset of $M$ in $S$ and
the right annihilator of a subset of $S$ in $M$, respectively. A
ring is {\it reduced} if it has no nonzero nilpotent elements.
{\it Baer rings} \cite{Ka} are introduced as rings in which the
right (left) annihilator of every nonempty subset is generated by
an idempotent. Principally projective  rings were introduced by
Hattori \cite{Hat} to study the torsion theory, that is,  a ring
$R$ is called  {\it left (right) principally projective} if every
principal left (right) ideal is projective. The concept of left
(right) principally projective rings (or left (right) Rickart
rings) has been comprehensively studied in the literature.
Regarding a generalization of Baer rings as well as
 principally projective rings, recall that a ring $R$ is called
{\it generalized left (right) principally projective} if for any
$x\in R$, the left (right) annihilator of $x^n$ is generated by an
idempotent for some positive integer $n$. A number of papers have
been written on generalized principally projective rings (see
\cite{Hir} and  \cite{HKL}).  A ring $R$ is  \emph{(von Neumann)
regular} if for any $a \in R$ there exists $b \in R$ with $a =
aba$. The ring $R$ is called {\it $\pi$-regular} if for each $a\in
R$ there exist a positive integer $n$ and an element $x$ in $R$
such that $a^n=a^nxa^n$. Similarly, call a ring $R$ {\it strongly
$\pi$-regular} if for every element $a \in R$ there exist a
positive integer $n$ (depending on $a$) and an element $x\in R$
such that $a^n = a^{n+1} x$, equivalently, there exists $y\in R$
such that $a^n=ya^{n+1}$. Every regular ring is $\pi$-regular and
every strongly $\pi$-regular ring is $\pi$-regular.
 There are regular or $\pi$-regular rings which are not strongly $\pi$-regular.

According to Rizvi and Roman, a module $M$  is said to be {\it
Rickart} \cite{TR3}  if for any $f\in S$, $r_M(f)=eM$ for some
$e^2=e\in S$. The class of Rickart modules is studied extensively
by different authors (see \cite{AHH3} and \cite{LRR}). Recently
the concept of a Rickart module is generalized in \cite{UHH} by
the present authors. The module $M$ is called {\it $\pi$-Rickart}
if for any $f\in S$, there exist $e^2=e\in S$ and a positive
integer $n$ such that $r_M(f^n)=eM$.  Dual Rickart modules are
defined by Lee, Rizvi and Roman in \cite{LRR1}. The module $M$ is
called {\it dual Rickart} if for any $f\in S$, Im$f=eM$ for some
$e^2=e\in S$.

In the second section,  we investigate general properties of dual
$\pi$-Rickart modules and Section 3 contains the results on the
structure of endomorphism ring of a dual $\pi$-Rickart module. In
what follows, we denote by $\Bbb Z$, $\Bbb Q$, $\Bbb R$ and $\Bbb
Z_n$ integers, rational numbers, real numbers and the ring of
integers modulo $n$, respectively, and $J(R)$ denotes the Jacobson
radical of a ring $R$.

\section{dual $\pi$-Rickart Modules} In this section, we introduce
 the concept of a dual $\pi$-Rickart
module that generalizes the notion of a dual Rickart module as
well as that of a $\pi$-regular ring.  We prove that some
properties of dual Rickart modules hold for this general setting.
Although  every direct summand of a dual $\pi$-Rickart module is
dual $\pi$-Rickart, a direct sum  of dual $\pi$-Rickart modules is
not dual $\pi$-Rickart. We give an example to show that a direct
sum of dual $\pi$-Rickart modules may not be dual $\pi$-Rickart.
It is shown that the class of some abelian dual $\pi$-Rickart
modules is closed under direct sums.

We start with our main definition.

\begin{defn} {\rm Let $M$ be an $R$-module with $S=$ End$_R(M)$. The module $M$ is
called {\it dual $\pi$-Rickart} if for any $f\in S$, there exist
$e^2=e\in S$ and a positive integer $n$ such that Im$f^n=eM$. }
\end{defn}
For the sake of brevity, in the sequel, $S$ will stand for the
endomorphism ring of the module $M$ considered. Dual $\pi$-Rickart
modules  are abundant around. Every semisimple module, every
injective module over a right hereditary ring and every module of
finite length are dual $\pi$-Rickart. Also every quasi-projective
strongly co-Hopfian module, every quasi-injective strongly Hopfian
module, every Artinian and Noetherian module is dual $\pi$-Rickart
(see Corollary \ref{fgric}). Every finitely generated module over
a right Artinian ring is a dual $\pi$-Rickart module (see
Proposition \ref{artin}).

\begin{prop}\label{pi} Let $R$ be a ring. Then the right $R$-module
$R$ is a dual $\pi$-Rickart module if and only if $R$
is a $\pi$-regular ring.
\end{prop}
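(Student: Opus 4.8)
The plan is to translate the module-theoretic condition "$\operatorname{Im} f^n = eM$ for an idempotent $e \in S$" into the ring-theoretic condition "$a^n = a^n x a^n$" when $M = R_R$, using the canonical identification $S = \operatorname{End}_R(R_R) \cong R$ given by $f \mapsto f(1)$, under which every $f \in S$ is left multiplication $\lambda_a$ by some $a = f(1) \in R$, and $\operatorname{Im} f = aR$. First I would record this identification and note that $f^n$ corresponds to $\lambda_{a^n}$, so $\operatorname{Im} f^n = a^n R$.

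For the forward direction, suppose $R_R$ is dual $\pi$-Rickart and let $a \in R$. Then there is an idempotent $e \in R$ and a positive integer $n$ with $a^n R = eR$. Since $e \in eR = a^n R$, write $e = a^n x$ for some $x \in R$; since $a^n \in a^n R = eR$ and $e$ is a left identity on $eR$, we get $a^n = e a^n = a^n x a^n$, which is exactly the $\pi$-regularity condition for $a$.

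For the converse, suppose $R$ is $\pi$-regular and let $f \in S$ correspond to $a \in R$; choose $n$ and $x$ with $a^n = a^n x a^n$. The key step is to produce an idempotent generating $a^n R$: set $e = a^n x$. Then $e^2 = a^n x a^n x = a^n x = e$, so $e$ is idempotent, and $eR = a^n x R \subseteq a^n R$, while $a^n = e a^n \in eR$ gives $a^n R \subseteq eR$; hence $a^n R = eR$, i.e., $\operatorname{Im} f^n = eM$.

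I do not anticipate a serious obstacle here: the only thing requiring a moment's care is making sure the idempotent one extracts lies in $S$ (equivalently in $R$) rather than merely in some larger ring, but the identification $S \cong R$ handles this automatically, and the element $e = a^n x$ is manifestly in $R$. The argument is essentially the classical equivalence between $\pi$-regularity of a ring and the principal right ideals of powers being generated by idempotents, now phrased in the dual-Rickart language.
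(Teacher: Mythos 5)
Your proof is correct and follows essentially the same route as the paper's: both directions hinge on extracting $e=a^nx$ from $a^nR=eR$ (forward) and verifying $e=a^nx$ is an idempotent generating $a^nR$ (converse). The only difference is that you make the identification $S=\mathrm{End}_R(R_R)\cong R$ explicit, which the paper leaves tacit.
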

\begin{proof} If the right $R$-module $R$ is a dual $\pi$-Rickart
module and $f \in R$, then there
exist $e^2=e\in R$ and a positive integer $n$ such that
Im$f^n=eR$. There exist $x,y\in R$ such that $e=f^{n}x$ and
$f^{n}=ey$. Multiplying the first equation from the right by
$f^{n}$,  we have  $f^{n}xf^{n}=ey=f^{n}$. Conversely, assume that
$R$ is a $\pi$-regular ring. Let $g \in R$. Then there exist a
positive integer $n$ and   $x\in R$ such that $g^{n}=g^{n}xg^{n}$.
Hence $e=g^{n}x$ is an idempotent of $R$. Since $e \in g^{n}R$ and
$g^{n}=g^{n}xg^{n}=eg^{n}\in eR$, we have Im$g^{n}=eR$. Therefore
the right $R$-module $R$ is  dual $\pi$-Rickart.
\end{proof}

It is clear that every dual Rickart module is dual $\pi$-Rickart.
The following example shows that every dual $\pi$-Rickart module
need not be dual Rickart.

\begin{ex}\rm{ Let $R$ denote the ring $\left (\begin{array}{cc}\Bbb Z_2 & \Bbb{Z}_2\\0 & \Bbb Z_2\end{array}\right)$
and  $M$  the right $R$-module  $\left (\begin{array}{cc}0 &
\Bbb{Z}_2\\\Bbb Z_2 & \Bbb Z_2\end{array}\right)$ with usual
matrix operations. If  $f\in S =$ End$_R(M)$, then there exist
$a$, $b$, $c\in \Bbb Z_2$ such that
\begin{center}$f\left (\begin{array}{cc} 0 & x\\y & z \end{array}\right) = \left (\begin{array}{cc} 0 & ax\\
by & cx+bz \end{array}\right)$ \end{center} \noindent By using
this image of $f$,  we prove that there exists a positive integer
$n$ such that Im$f^n$ is a direct summand of $M$. Consider
the following cases for $a$, $b$, $c\in \Bbb Z_2$.\\
$\texttt{Case 1}$.  If  $a = b = c = 1$, then $f$ is an epimorphism.\\
$\texttt{Case 2}$.  If $a = 0$, $b = 0$, $c = 1$, then  $f^2 = 0$.\\
$\texttt{Case 3}$.  If $a = 0$, $b = 1$, $c = 1$ or $a = 0$, $b =
1$, $c = 0$, then in either case Im$f = \left\{\left
(\begin{array}{cc} 0 & 0\\x & y
\end{array}\right)\mid x, y\in \Bbb Z_2\right\}$ is a direct summand
of $M$.\\
$\texttt{Case 4}$.  If $a = 1$, $b = 0$, $c = 1$, then Im$f =
\left\{\left (\begin{array}{cc} 0 & x\\0 & x
\end{array}\right)\mid x\in \Bbb Z_2\right\}$ is a direct summand of
$M$.\\
$\texttt{Case 5}$. If $a = 1$, $b = 0$, $c = 0$, then Im$f =
\left\{\left (\begin{array}{cc} 0 & x\\0 & 0
\end{array}\right)\mid x\in \Bbb Z_2\right\}$ is a direct summand of
$M$.\\
$\texttt{Case 6}$. If $a = 1$, $b = 1$, $c = 0$, then $f$ is an identity map. \\
$\texttt{Case 7}$. If $a = 0$, $b = 0$, $c = 0$, then $f$ is a
zero map.

\noindent In all cases there exists a positive integer $n$ such
that  Im$f^n$ is a direct summand of $M$ and so $M$ is a dual
$\pi$-Rickart module. The module $M$ is not dual Rickart by the
second case, since  Im$f = \left\{\left (\begin{array}{cc} 0 &
0\\0 & x
\end{array}\right)\mid x\in \Bbb Z_2\right\}$. }\end{ex}

Our next aim is to find conditions under which a dual
$\pi$-Rickart module is dual Rickart.

\begin{prop}\label{red}  Let $M$ be  a dual Rickart module. Then $M$ is dual
$\pi$-Rickart. The converse holds if $S$ is a reduced ring.
\end{prop}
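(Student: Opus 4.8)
The forward implication requires nothing: if $M$ is dual Rickart, then for each $f\in S$ we have $\operatorname{Im}f=eM$ with $e^2=e\in S$, which is exactly the dual $\pi$-Rickart condition with $n=1$. So the substance is the converse, and my plan is to prove that when $S$ is reduced one may always take $n=1$ in the dual $\pi$-Rickart condition. Fix $f\in S$ and choose $e^2=e\in S$ together with a positive integer $n$ such that $\operatorname{Im}f^n=eM$. I would first record two elementary facts. Since $S$ is reduced, its idempotents are central, so $e$ lies in the centre of $S$; and since every element of $\operatorname{Im}f^n$ is an element of $eM$ and is therefore fixed by $e$, we get $ef^n=f^n$.

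The key step is to upgrade $ef^n=f^n$ to $ef=f$. For this I would set $g=(1-e)f$ and use the centrality of $e$ to compute $g^{n}=(1-e)^{n}f^{n}=(1-e)f^{n}=f^{n}-ef^{n}=0$, so that $g$ is nilpotent; since $S$ has no nonzero nilpotent elements, $g=0$, that is, $f=ef$. I expect this nilpotency argument to be the main (indeed essentially the only) obstacle, and it is precisely where reducedness of $S$ is consumed — once via the centrality of $e$ and, decisively, once to kill the nilpotent $g$.

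Finally, it remains to identify $\operatorname{Im}f$. From $f=ef$ we get $\operatorname{Im}f=e\,f(M)\subseteq eM$, whereas $\operatorname{Im}f^{n}\subseteq\operatorname{Im}f$ (as $f^{n}=f\circ f^{n-1}$) yields $eM=\operatorname{Im}f^{n}\subseteq\operatorname{Im}f$. Hence $\operatorname{Im}f=eM$ is a direct summand of $M$, so $M$ is dual Rickart, which completes the argument.
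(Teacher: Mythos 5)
Your proof is correct and follows essentially the same route as the paper: both use that idempotents in a reduced ring are central to show $((1-e)f)^n=(1-e)f^n=0$, kill this nilpotent to get $f=ef$, and then combine $\operatorname{Im}f\subseteq eM$ with $eM=\operatorname{Im}f^n\subseteq\operatorname{Im}f$. No gaps.
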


\begin{proof}  The first statement is clear.  Suppose that $S$ is a reduced ring and
$M$ is a dual $\pi$-Rickart module. Let $f\in S$. There exist a
positive integer $n$ and  an idempotent $e\in S$ such that Im$f^n
= eM$. If $n=1$, there is nothing to do. Assume that $n>1$. Then
$(1-e)f^n M = 0$ and so  $(1-e)f^n = 0$. Since $S$ is a reduced
ring, $e$ is central and $((1-e)f)^n = 0$. Also it implies $(1-e)f
= 0$ or $f = ef$. Thus $Imf \leq eM$. The reverse inclusion
$eM\leq Imf$ follows from $eM = f^n M\leq f(f^{n-1})M \leq fM$.
Therefore $eM = Imf$ and $M$ is a dual Rickart module.
\end{proof}

By using a different condition on an endomorphism ring of a module
we show that a dual $\pi$-Rickart module is dual Rickart. To do
this we need the following lemma.

\begin{lem}\label{epim} Let $M$ be a module.
Then $M$ is  dual $\pi$-Rickart and $S$ is a domain if and only if
every nonzero element of $S$ is an epimorphism.
\end{lem}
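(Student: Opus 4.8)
The plan is to prove both directions directly, leaning on two elementary properties of a domain $S$: its only idempotents are $0$ and $1$ (since $e^2=e$ gives $e(e-1)=0$), and it has no nonzero nilpotent elements (by the usual minimal-degree argument $a\cdot a^{n-1}=0$).

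For the implication ``$M$ dual $\pi$-Rickart and $S$ a domain $\Rightarrow$ every nonzero endomorphism is epic'', I would take $0\ne f\in S$ and apply the definition to obtain $e^2=e\in S$ and a positive integer $n$ with $\operatorname{Im}f^n=eM$. Since $e\in\{0,1\}$, one of two things occurs. If $e=0$, then $f^nM=0$, so $f^n=0$, and as a domain has no nonzero nilpotents this forces $f=0$, contradicting $f\ne 0$. Hence $e=1$ and $\operatorname{Im}f^n=M$; writing $f^n=f\circ f^{n-1}$, the surjectivity of $f^n$ yields $\operatorname{Im}f=M$, i.e. $f$ is an epimorphism.

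For the converse, assuming every nonzero element of $S$ is an epimorphism, I would first observe that $M$ is dual $\pi$-Rickart: for each $f\in S$ either $f=0$, so $\operatorname{Im}f=0\cdot M$, or $f\ne0$, so $\operatorname{Im}f=M=1\cdot M$ (thus $n=1$ always works, and $M$ is in fact dual Rickart). Then, to see $S$ is a domain, I would suppose $fg=0$ with $f,g\in S$ nonzero; since $g$ is onto, $f(M)=f(g(M))=(fg)(M)=0$, so $f=0$, a contradiction. Hence $S$ has no zero divisors and, having a nonzero identity $1_M$, it is a domain.

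The argument is short and I do not anticipate a genuine obstacle. The one step worth singling out is the passage $\operatorname{Im}f^n=M\Rightarrow\operatorname{Im}f=M$, which is precisely where the ``$\pi$'' in the hypothesis is used, combined with the collapse of the idempotent $e$ to $0$ or $1$ that the domain hypothesis forces.
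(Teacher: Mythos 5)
Your proposal is correct and follows essentially the same route as the paper: apply the definition to get $\operatorname{Im}f^n=eM$, use the domain hypothesis to force $e=1$ (the paper cancels $f^n$ from $(1-e)f^n=0$ where you first classify the idempotents, but this is the same elementary observation), and conclude $\operatorname{Im}f\supseteq\operatorname{Im}f^n=M$. The converse you spell out is exactly what the paper dismisses as ``clear.''
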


\begin{proof} The sufficiency is clear. For the necessity,
let  $M$  be a dual $\pi$-Rickart module and $0\neq f\in S$. Then
there exist a positive integer $n$ and  an idempotent $e\in S$
such that Im$f^n = eM$. Hence $f^n= ef^n$. Since $S$ is a domain
and $f^n$ is nonzero, we have $e = 1$ and so Im$f^n = M$. This
implies that Im$f=M$. Thus $f$ is an epimorphism. \end{proof}

Recall that  a module $M$ has {\it $C_2$ condition} if any
submodule $N$ of $M$ which is isomorphic to a direct summand of
$M$ is a direct summand, while a module $M$ is said to have {\it
$D_2$ condition} if any submodule $N$ of $M$ with $M/N$ isomorphic
to a direct summand of $M$, then $N$ is a direct summand of $M$.
 In the next result we obtain relations between $\pi$-Rickart
and dual $\pi$-Rickart modules by using $C_2$ and $D_2$
conditions. An endomorphism $f$ of a module $M$ is called {\it
morphic} \cite{NC} if $M/fM\cong$ Ker$f$. The module $M$ is called
 {\it morphic} if every endomorphism of $M$ is morphic.

\begin{thm}\label{gizem} Let
$M$ be a module. Then we have the following.
\begin{enumerate}\item[{\rm (1)}]  If $M$ is a  dual
$\pi$-Rickart module with D$_2$ condition, then  it is $\pi$-Rickart.
\item[{\rm (2)}] If $M$ is a  $\pi$-Rickart module with
C$_2$ condition, then  it is dual $\pi$-Rickart.
\item[{\rm (3)}] If $M$ is projective morphic, then it is
$\pi$-Rickart if and only if it is dual
$\pi$-Rickart.
\end{enumerate}
\end{thm}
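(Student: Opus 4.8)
The plan is to treat the three statements separately; in each case the defining property of being (dual) $\pi$-Rickart lets us replace the given $f\in S$ by a single power $f^{n}$, and then the listed closure condition does the rest. Throughout I use $r_M(f^{n})=\operatorname{Ker}f^{n}$ and the isomorphism $M/\operatorname{Ker}f^{n}\cong\operatorname{Im}f^{n}$.

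For (1), pick $e^{2}=e\in S$ and $n\geq 1$ with $\operatorname{Im}f^{n}=eM$; then $M/\operatorname{Ker}f^{n}\cong eM$ is a direct summand of $M$, so the $D_2$ condition forces $\operatorname{Ker}f^{n}=r_M(f^{n})$ to be a direct summand, i.e.\ of the form $gM$ with $g^{2}=g$, and $M$ is $\pi$-Rickart. For (2), dually, pick $e^{2}=e$ and $n$ with $r_M(f^{n})=\operatorname{Ker}f^{n}=eM$; in the decomposition $M=eM\oplus(1-e)M$ one has $f^{n}(eM)=0$ while $f^{n}$ is injective on $(1-e)M$, so $\operatorname{Im}f^{n}=f^{n}\big((1-e)M\big)\cong(1-e)M$ is isomorphic to a direct summand of $M$, and the $C_2$ condition upgrades it to a genuine direct summand; hence $M$ is dual $\pi$-Rickart.

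For (3) I would lean on two facts about a projective morphic module: morphicness gives $M/gM\cong\operatorname{Ker}g$ for every $g\in S$, and projectivity of $M$ makes any short exact sequence $0\to A\to M\to C\to 0$ split once $C$ is a direct summand (hence a projective quotient) of $M$; in particular a projective module satisfies $D_2$. If $M$ is $\pi$-Rickart, take $e^{2}=e$ and $n$ with $\operatorname{Ker}f^{n}=eM$; morphicness gives $M/f^{n}M\cong eM$, which is projective, so $0\to f^{n}M\to M\to M/f^{n}M\to 0$ splits and $\operatorname{Im}f^{n}$ is a direct summand, making $M$ dual $\pi$-Rickart. Conversely, if $M$ is dual $\pi$-Rickart, take $e^{2}=e$ and $n$ with $\operatorname{Im}f^{n}=eM$; then $eM$ is a direct summand of the projective module $M$, hence projective, so $0\to\operatorname{Ker}f^{n}\to M\xrightarrow{f^{n}}eM\to 0$ splits and $r_M(f^{n})=\operatorname{Ker}f^{n}$ is a direct summand, making $M$ $\pi$-Rickart (equivalently, since $M$ projective implies $D_2$, apply part (1)).

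None of these steps is computational; the point to watch is the bookkeeping in (3), where in each implication one must check that the module in the quotient position of the short exact sequence really is a direct summand of $M$, so that projectivity supplies the splitting. Morphicness is genuinely needed for the ``$\pi$-Rickart $\Rightarrow$ dual $\pi$-Rickart'' direction---it trades the kernel condition for a condition on $M/f^{n}M$---whereas the reverse direction uses only that $M$ is projective; one cannot shortcut the first direction through part (2) because a projective module need not satisfy $C_2$.
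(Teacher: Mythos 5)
Your proposal is correct and follows essentially the same route as the paper: parts (1) and (2) rest on the isomorphism $M/\operatorname{Ker}f^{n}\cong\operatorname{Im}f^{n}$ combined with the $D_2$ and $C_2$ conditions respectively, which is exactly the paper's (one-line) argument. For part (3), which the paper dismisses as ``clear,'' you supply the natural details --- projectivity yields $D_2$ for one direction and morphicness converts the kernel condition into a projective quotient for the other --- and these are sound.
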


\begin{proof} Since $M/$Ker$f^n \cong$ Im$f^n$,  D$_2$ and C$_2$ conditions complete the proof of (1) and
(2). The proof of (3) is clear.
\end{proof}

The next result is an immediate consequence of Theorem
\ref{gizem}.

\begin{cor} Let $M$ be a module with $C_2$ and $D_2$
conditions. Then  $M$ is a dual $\pi$-Rickart module if and only
if it is $\pi$-Rickart. \end{cor}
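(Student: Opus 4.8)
The plan is to derive this corollary directly from Theorem~\ref{gizem}. The corollary asserts that for a module $M$ satisfying both $C_2$ and $D_2$ conditions, being dual $\pi$-Rickart is equivalent to being $\pi$-Rickart. The forward direction ($M$ dual $\pi$-Rickart $\Rightarrow$ $M$ $\pi$-Rickart) is exactly part~(1) of Theorem~\ref{gizem}, which needs only the $D_2$ hypothesis. The reverse direction ($M$ $\pi$-Rickart $\Rightarrow$ $M$ dual $\pi$-Rickart) is exactly part~(2), which needs only the $C_2$ hypothesis. Since our module is assumed to have both conditions, both implications apply, and we are done.

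Concretely, I would write: ``This is immediate from Theorem~\ref{gizem}(1) and (2).'' If a slightly more expanded version is wanted, I would spell out the two directions. Suppose $M$ is dual $\pi$-Rickart; since $M$ has the $D_2$ condition, Theorem~\ref{gizem}(1) gives that $M$ is $\pi$-Rickart. Conversely, suppose $M$ is $\pi$-Rickart; since $M$ has the $C_2$ condition, Theorem~\ref{gizem}(2) gives that $M$ is dual $\pi$-Rickart. This establishes the equivalence.

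There is essentially no obstacle here: the corollary is a formal bookkeeping consequence of the theorem, and all the real content (the isomorphism $M/\mathrm{Ker}\,f^n \cong \mathrm{Im}\,f^n$ together with the definitions of $C_2$ and $D_2$) has already been handled in the proof of Theorem~\ref{gizem}. The only thing to be careful about is matching the hypotheses correctly to the two parts of the theorem, i.e.\ that part~(1) uses $D_2$ and part~(2) uses $C_2$, so that having both conditions simultaneously is exactly what is required to run both implications. One could also remark that neither direction needs the full strength of both conditions individually, which is why the theorem was stated in the more refined form; the corollary simply packages the case when both hold.

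\begin{proof}
This follows immediately from Theorem~\ref{gizem}(1) and (2). Indeed, if $M$ is dual $\pi$-Rickart, then since $M$ has the $D_2$ condition, $M$ is $\pi$-Rickart by Theorem~\ref{gizem}(1). Conversely, if $M$ is $\pi$-Rickart, then since $M$ has the $C_2$ condition, $M$ is dual $\pi$-Rickart by Theorem~\ref{gizem}(2).
\end{proof}
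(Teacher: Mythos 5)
Your proof is correct and matches the paper exactly: the paper states this corollary as an immediate consequence of Theorem~\ref{gizem}, and you have simply made the two applications of parts (1) and (2) explicit, matching the $D_2$ hypothesis to the forward direction and $C_2$ to the converse.
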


 In  \cite[Proposition 2.6]{LRR1}, it is shown that  $M$ is a dual
Rickart module if and only if the short exact sequence
$0\rightarrow Imf \rightarrow M \rightarrow M/${\rm Im}$f
\rightarrow 0$  splits for any $f\in S$.  In this direction we can
give a similar characterization for dual $\pi$-Rickart modules.

 \begin{lem}\label{split} The following are equivalent for a module $M$.
\begin{enumerate}
    \item[{\rm (1)}] $M$ is a dual $\pi$-Rickart module.
   \item[{\rm (2)}] For every $f\in S$ there exists a positive integer $n$ such that the short exact sequence
    $0\rightarrow {\rm  Im}f^{n}\rightarrow M \rightarrow M/${\rm Im}$f^{n}\rightarrow 0$
   splits.
\end{enumerate}
\end{lem}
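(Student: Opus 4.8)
The plan is to prove the two implications separately, using the standard fact that a submodule $A \leq M$ is a direct summand if and only if the short exact sequence $0 \to A \to M \to M/A \to 0$ splits.

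First, assume $M$ is dual $\pi$-Rickart and fix $f \in S$. By definition there exist an idempotent $e^2 = e \in S$ and a positive integer $n$ with $\operatorname{Im}f^n = eM$. Since $eM$ is a direct summand of $M$ (with complement $(1-e)M$), the inclusion $\operatorname{Im}f^n \hookrightarrow M$ splits: the map $1-e$ restricted appropriately, or rather the projection $M = eM \oplus (1-e)M \to eM$, furnishes a retraction. Hence the sequence $0 \to \operatorname{Im}f^n \to M \to M/\operatorname{Im}f^n \to 0$ splits, which is exactly (2).

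Conversely, assume (2) and let $f \in S$. Pick the positive integer $n$ for which $0 \to \operatorname{Im}f^n \to M \to M/\operatorname{Im}f^n \to 0$ splits. A split monomorphism has image a direct summand, so $\operatorname{Im}f^n$ is a direct summand of $M$; that is, there is an idempotent $e^2 = e \in S$ with $e M = \operatorname{Im}f^n$. This is precisely the defining condition for $M$ to be dual $\pi$-Rickart, so (1) holds.

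There is essentially no obstacle here: the content is the elementary equivalence between ``direct summand'' and ``split short exact sequence,'' together with the observation that the idempotent in $S$ witnessing the summand is the same datum as the one in the definition of dual $\pi$-Rickart. The only point requiring a line of care is the identification of the splitting idempotent of $eM \subseteq M$ as an element of $S = \operatorname{End}_R(M)$ rather than merely an abstract direct-sum decomposition of $R$-modules, but this is immediate since $eM$ a direct summand of $M$ in $R$-Mod is by definition given by such an idempotent endomorphism.
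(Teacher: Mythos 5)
Your proof is correct and follows essentially the same route as the paper, which likewise reduces the equivalence to the standard fact that $\operatorname{Im}f^{n}$ is a direct summand of $M$ if and only if the sequence $0\rightarrow \operatorname{Im}f^{n}\rightarrow M \rightarrow M/\operatorname{Im}f^{n}\rightarrow 0$ splits. You merely spell out the two implications and the identification of the splitting with an idempotent of $S$ in more detail than the paper's one-line argument.
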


\begin{proof} For any $f\in S$ and any positive integer $n$
consider the short exact sequence $0\rightarrow {\rm
Im}f^{n}\rightarrow M \rightarrow M/${\rm Im}$f^{n}\rightarrow 0$.
    The short exact sequence  splits in $M$
if and only if Im$f^n$ is a direct summand of $M$ if and only if
$M$ is  a dual $\pi$-Rickart module.
\end{proof}

One may suspect that every submodule of a dual $\pi$-Rickart
module is dual $\pi$-Rickart. The following example shows that
this is not the case.

\begin{ex}\label{alt}{\rm Consider $\Bbb Q$ as a $\Bbb Z$-module.
Then $S =$ End$_{\Bbb Z}(\Bbb Q)$ is isomorphic to $\Bbb Q$. Since
every element of $S$ is an isomorphism or zero, $\Bbb Q$ is dual
$\pi$-Rickart. Now consider the submodule $\Bbb Z$ and $f\in$
End$_{\Bbb Z}(\Bbb Z)$ defined by $f(x) = 2x$,  where $x\in \Bbb
Z$. Since the image of any power of $f$ can not be a direct
summand of $\Bbb Z$, the submodule $\Bbb Z$ is not dual
$\pi$-Rickart. }
\end{ex}

Although  every submodule of a dual $\pi$-Rickart module need not
be  dual $\pi$-Rickart by Example \ref{alt},  we now prove that
every direct summand of  dual $\pi$-Rickart modules is also dual
$\pi$-Rickart.

\begin{prop}\label{diksub} Let $M$ be a  dual $\pi$-Rickart module. Then every direct summand
of $M$ is also dual $\pi$-Rickart.
\end{prop}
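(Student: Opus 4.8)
The plan is to let $N$ be a direct summand of $M$, write $M = N \oplus K$ for some submodule $K$, and let $\pi : M \to N$ denote the projection along $K$ and $\iota : N \to M$ the inclusion. Given an arbitrary $g \in \mathrm{End}_R(N)$, I would lift it to the endomorphism $f = \iota g \pi \in S$, so that $f$ acts as $g$ on $N$ and kills $K$. Since $M$ is dual $\pi$-Rickart, there exist an idempotent $e^2 = e \in S$ and a positive integer $n$ with $\mathrm{Im}\, f^n = eM$. The first key observation is that $f^n = \iota g^n \pi$, because $\pi \iota = \mathrm{id}_N$ collapses the middle composites; hence $\mathrm{Im}\, f^n = \iota(\mathrm{Im}\, g^n) = \mathrm{Im}\, g^n$ viewed inside $M$, and in particular $\mathrm{Im}\, f^n \subseteq N$.

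Next I would show that $\mathrm{Im}\, g^n$ is a direct summand of $N$. From $\mathrm{Im}\, f^n = eM$ and $\mathrm{Im}\, f^n \subseteq N$ we get $eM \subseteq N$, so $eM = eM \cap N$. Restricting $e$ to $N$ gives an idempotent $e' = \pi e \iota \in \mathrm{End}_R(N)$ (or more simply, just observe $e|_N$ maps into $N$ and is idempotent there), and one checks $e'N = eN = eM = \mathrm{Im}\, g^n$; the containment $eM \subseteq N$ makes all of this go through cleanly. Since $e'$ is an idempotent of $\mathrm{End}_R(N)$ with $\mathrm{Im}\, g^n = e'N$, the module $N$ is dual $\pi$-Rickart.

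The main thing to be careful about — though it is not really an obstacle — is the bookkeeping of the identifications between submodules of $N$ and submodules of $M$: one must consistently use $\pi\iota = \mathrm{id}_N$ and the fact that $eM \subseteq N$ to transfer the idempotent $e$ from $S$ down to $\mathrm{End}_R(N)$ and confirm that the image it cuts out really is $\mathrm{Im}\, g^n$. Once the inclusion $\mathrm{Im}\, f^n \subseteq N$ is in hand, everything else is a routine restriction argument, so the proof is short. Alternatively, one could phrase the whole thing via Lemma \ref{split}: a summand of $M$ inherits the splitting of $0 \to \mathrm{Im}\, g^n \to N \to N/\mathrm{Im}\, g^n \to 0$ from the corresponding split sequence for $f$ in $M$, but the direct computation above is cleaner.
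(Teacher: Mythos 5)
Your proposal is correct and follows essentially the same route as the paper: extend an endomorphism of the summand $N$ to all of $M$ by zero on the complement, apply the dual $\pi$-Rickart hypothesis, and use $\mathrm{Im}\,f^n\subseteq N$ to cut the resulting summand down to $N$. The only cosmetic difference is the last step, where you restrict the idempotent $e$ to $N$ while the paper invokes the modular law to write $N = eM\oplus(N\cap Q)$; both are valid and equivalent.
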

\begin{proof} Let $M = N\oplus P$ with  $S_{N}=$ End$_{R}(N)$. Define $g =
f\oplus 0_{|_{P}}$, for any $f\in S_{N}$ and so $g\in S$. By
hypothesis, there exist a positive integer $n$ and $e^2 = e\in S$
such that Im$g^n = eM$ and $g^n = f^n \oplus 0_{|_{P}}$. Hence
\linebreak $eM = $ Im$g^n = f^nN\leq N$. Let $M = eM\oplus Q$ for
some submodule $Q$. Thus $N = eM \oplus (N\cap Q) = f^nN\oplus
(N\cap Q)$. Therefore $N$ is dual $\pi$-Rickart.
\end{proof}

\begin{cor}\label{smndcor} Let $R$ be a
$\pi$-regular ring  with  $e=e^2\in R$. Then $M=eR$ is a dual
$\pi$-Rickart $R$-module.
 \end{cor}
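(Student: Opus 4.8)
The plan is to deduce this directly from the two results already in hand: Proposition~\ref{pi}, which translates $\pi$-regularity of $R$ into the statement that the right $R$-module $R$ is dual $\pi$-Rickart, and Proposition~\ref{diksub}, which says that dual $\pi$-Rickartness passes to direct summands.

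First I would note that since $R$ is a $\pi$-regular ring, Proposition~\ref{pi} gives that the right $R$-module $R_R$ is dual $\pi$-Rickart. Next I would record the standard decomposition $R_R = eR \oplus (1-e)R$, valid because $e = e^2$, so that $eR$ is a direct summand of $R_R$. Finally, applying Proposition~\ref{diksub} to the dual $\pi$-Rickart module $R_R$ and its direct summand $eR$ yields that $eR$ is a dual $\pi$-Rickart $R$-module, as claimed.

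There is essentially no obstacle here: the corollary is a two-line consequence of combining Proposition~\ref{pi} with Proposition~\ref{diksub}. The only point one must be slightly careful about is that the direct sum decomposition used is as right $R$-modules (not merely as one-sided ideals in some ambient sense), but this is immediate from $e$ being an idempotent of $R = \mathrm{End}_R(R_R)$ acting by left multiplication. Hence the write-up will be short, with the bulk of the work already discharged by the earlier propositions.
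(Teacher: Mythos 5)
Your proof is correct and matches the paper's intended argument: the corollary is stated there without proof precisely because it follows, exactly as you write, by combining Proposition~\ref{pi} (which makes $R_R$ dual $\pi$-Rickart) with Proposition~\ref{diksub} applied to the direct summand $eR$ of $R_R = eR \oplus (1-e)R$. No gaps.
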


 Here we give the following result for $\pi$-regular rings.

\begin{cor} Let $R = R_1\oplus R_2$ be a $\pi$-regular ring with direct sum of the rings $R_1$ and $R_2$. Then
the rings $R_1$ and $R_2$ are also $\pi$-regular.
\end{cor}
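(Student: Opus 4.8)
The plan is to deduce this directly from Propositions \ref{pi} and \ref{diksub}. Put $e=(1_{R_1},0)\in R$; this is a central idempotent of $R$ with $eR=R_1$ and $(1-e)R=R_2$. Since $R$ is $\pi$-regular, Proposition \ref{pi} says that the right $R$-module $R$ is dual $\pi$-Rickart, and then Proposition \ref{diksub} (this is precisely Corollary \ref{smndcor}) gives that the direct summand $eR=R_1$ is a dual $\pi$-Rickart right $R$-module. The same applies to $(1-e)R=R_2$.

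The second step is to pass from ``dual $\pi$-Rickart as a right $R$-module'' to ``dual $\pi$-Rickart as a right $R_1$-module''. Because $e$ is central, for $x\in eR$ and $r\in R$ we have $x(1-e)=x-xe=x-x=0$, hence $xr=x(er)$ with $er\in eRe=eR=R_1$; thus the right $R$-action on $eR$ factors through the ring projection $R\to R_1$, and the right $R$-submodules of $eR$ are exactly its right $R_1$-submodules. Moreover every $R$-endomorphism $\phi$ of $eR$ satisfies $\phi(x)=\phi(ex)=\phi(e)x$ with $\phi(e)=\phi(e)e\in eRe=R_1$, so End$_R(eR)=R_1=$ End$_{R_1}(R_1)$, endomorphisms acting as left multiplications in both settings. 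Therefore, for each $f\in R_1$, the images Im$f^n$ and the idempotents of the endomorphism ring are the same whether $R_1$ is regarded over $R$ or over $R_1$. It follows that $R_1$ is a dual $\pi$-Rickart right $R_1$-module, and Proposition \ref{pi} applied again yields that $R_1$ is $\pi$-regular; replacing $e$ by $1-e$ gives the same conclusion for $R_2$.

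One could instead give a one-line element-wise argument: for $a\in R_1$, $\pi$-regularity of $R$ applied to $(a,0)$ produces a positive integer $n$ and $(x,y)\in R$ with $(a,0)^n=(a,0)^n(x,y)(a,0)^n$, and comparing first coordinates gives $a^n=a^nxa^n$ with $x\in R_1$. I would present the module-theoretic proof, since it fits the development of this section, and note this shortcut. There is no real obstacle here; the only point worth spelling out is the identification of the endomorphism rings and submodule lattices in the second step, which is exactly where the centrality of $e$ is used.
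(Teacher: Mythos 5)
Your proof is correct and follows exactly the route the paper intends: the corollary is stated without proof immediately after Corollary \ref{smndcor}, precisely because it is meant to follow from Proposition \ref{pi} and Proposition \ref{diksub} via the central idempotent $e=(1_{R_1},0)$, as you do. Your careful identification of $\mathrm{End}_R(eR)$ with $R_1$ (and of the two module structures) fills in the one step the paper leaves tacit, and your element-wise shortcut is also valid.
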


We now characterize $\pi$-regular rings in terms of dual
$\pi$-Rickart modules.
\begin{thm} Let $R$ be a ring. Then $R$ is $\pi$-regular if and
only if every cyclic projective $R$-module is dual $\pi$-Rickart.
\end{thm}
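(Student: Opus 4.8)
The plan is to prove both directions by reducing to Proposition~\ref{pi}, which already tells us that $R$ is $\pi$-regular exactly when the right $R$-module $R$ is dual $\pi$-Rickart. For the forward direction, assume $R$ is $\pi$-regular and let $M$ be a cyclic projective $R$-module. Being cyclic, $M\cong R/I$ for a right ideal $I$, and being projective, this quotient map splits, so $M$ is isomorphic to a direct summand $eR$ of $R$ for some idempotent $e=e^2\in R$. Now Corollary~\ref{smndcor} (equivalently, Proposition~\ref{diksub} applied to $R_R$) says precisely that a direct summand of the dual $\pi$-Rickart module $R_R$ is again dual $\pi$-Rickart; hence $eR$, and therefore $M$, is dual $\pi$-Rickart.

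For the converse, suppose every cyclic projective $R$-module is dual $\pi$-Rickart. In particular $R$ itself is a cyclic (generated by $1$) projective right $R$-module, so $R_R$ is dual $\pi$-Rickart. By Proposition~\ref{pi}, $R$ is $\pi$-regular. This direction is essentially immediate once one observes that $R_R$ is among the modules being hypothesized dual $\pi$-Rickart.

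The only place that requires a little care — and the step I expect to be the main (minor) obstacle — is the structural claim in the forward direction that a cyclic projective module is isomorphic to $eR$ for an idempotent $e$. This is standard: write $M=xR$, let $\varphi:R\to M$, $\varphi(r)=xr$, be the canonical epimorphism; projectivity of $M$ gives a splitting $\psi:M\to R$ with $\varphi\psi=\mathrm{id}_M$, so $e:=\psi\varphi$ is an idempotent of $S=\mathrm{End}(R_R)\cong R$ and $M\cong eR$ as right $R$-modules. One should also note that the property of being dual $\pi$-Rickart is preserved under isomorphism of modules (immediate from the definition, since an isomorphism $M\cong M'$ induces a ring isomorphism of endomorphism rings carrying idempotents to idempotents and images to images). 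Combining these observations with Corollary~\ref{smndcor} closes the argument in both directions.
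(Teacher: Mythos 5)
Your proposal is correct and follows essentially the same route as the paper: both directions reduce to Proposition \ref{pi}, and the forward direction realizes the cyclic projective module as (isomorphic to) a direct summand of $R_R$ and invokes Proposition \ref{diksub}. The paper phrases the summand as the complement $I$ of $r_R(m)$ and explicitly transports the property along the isomorphism $\varphi:I\to M$, whereas you write it as $eR$ and appeal to isomorphism invariance, but this is only a cosmetic difference.
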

\begin{proof} The sufficiency is clear. For the necessity, let
$M=mR$ be a projective module. Then $R=r_R(m)\oplus I$ for some
right ideal $I$ of $R$. Let $I \stackrel{\varphi}\rightarrow M$
denote the isomorphism and $f\in S$. By Proposition \ref{pi} and
Proposition \ref{diksub},
$(\varphi^{-1}f\varphi)^nI=(\varphi^{-1}f^n\varphi)I$ is a direct
summand of $I$ for some positive integer $n$. Hence
$I=(\varphi^{-1}f^n\varphi)I\oplus K$ for some  right ideal $K$ of
$I$. Thus $\varphi I=(f^n \varphi)I\oplus \varphi K$,  and so
$M=f^nM\oplus \varphi K$. Therefore $M$ is dual $\pi$-Rickart.
\end{proof}

\begin{thm} Let $R$ be a ring and consider the following conditions.
\begin{enumerate}
    \item [{\rm (1)}] Every free $R$-module is
   dual  $\pi$-Rickart.
    \item [{\rm (2)}] Every projective $R$-module is
    dual $\pi$-Rickart.
     \item [{\rm (3)}] Every flat $R$-module is
    dual $\pi$-Rickart.
\end{enumerate}
Then {\rm (3) $\Rightarrow$ (2) $\Leftrightarrow$ (1)}. Moreover
{\rm (2) $\Rightarrow$ (3)} holds for finitely presented modules.
\end{thm}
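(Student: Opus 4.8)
The plan is to prove the three implications separately, using the elementary facts about projective, free, and flat modules together with Proposition~\ref{diksub} (direct summands of dual $\pi$-Rickart modules are dual $\pi$-Rickart).

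First I would handle $(1)\Rightarrow(2)$. Every projective module $P$ is a direct summand of a free module $F$, say $F=P\oplus P'$. If every free module is dual $\pi$-Rickart, then $F$ is dual $\pi$-Rickart, and by Proposition~\ref{diksub} so is its direct summand $P$. The reverse implication $(2)\Rightarrow(1)$ is immediate, since every free module is projective. This establishes $(1)\Leftrightarrow(2)$ with essentially no work.

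Next, $(3)\Rightarrow(2)$: every projective module is flat, so if all flat modules are dual $\pi$-Rickart then in particular all projective modules are. For the last assertion, $(2)\Rightarrow(3)$ under the finitely presented hypothesis, I would invoke the standard structure theorem: a finitely presented flat module is projective. (More precisely, over any ring a finitely presented flat module is projective; this is a classical result.) Hence if $M$ is finitely presented and flat, it is projective, and by $(2)$ it is dual $\pi$-Rickart. Assembling these gives exactly the claimed diagram of implications.

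The main obstacle — really the only nontrivial point — is justifying that a finitely presented flat module is projective; everything else is a direct application of Proposition~\ref{diksub} and the hereditary-closure properties of the module classes involved. I would cite this fact (it appears in standard homological algebra references) rather than prove it, since a self-contained proof would be a detour. One should also note why $(2)\Rightarrow(3)$ cannot hold in general without finite presentation: over a non-$\pi$-regular ring, $\mathbb{Q}$ as a $\mathbb{Z}$-module is flat but its submodule $\mathbb{Z}$ fails to be dual $\pi$-Rickart, as shown in Example~\ref{alt}, and flat modules need not be projective, so the implication genuinely requires the restriction.
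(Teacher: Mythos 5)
Your proof is correct and follows essentially the same route as the paper's: $(1)\Leftrightarrow(2)$ via Proposition~\ref{diksub} and the fact that projectives are direct summands of free modules, $(3)\Rightarrow(2)$ since projectives are flat, and $(2)\Rightarrow(3)$ for finitely presented modules via the classical fact that a finitely presented flat module is projective. (Your closing aside is off the mark --- in Example~\ref{alt} it is the free module $\mathbb{Z}$ itself that fails to be dual $\pi$-Rickart over $\mathbb{Z}$, so condition (1) already fails there and the example does not isolate a failure of $(2)\Rightarrow(3)$ --- but this does not affect the proof of the stated implications.)
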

\begin{proof} (3) $\Rightarrow$  (2) $\Rightarrow $ (1) Clear. (1) $\Rightarrow $ (2)
Let $M$ be a projective $R$-module. Then $M$ is a direct summand
of a free $R$-module $F$. By (1), $F$ is dual  $\pi$-Rickart, and
so is $M$ due to Proposition \ref{diksub}.

(2) $\Rightarrow$ (3) is clear from the fact that finitely
presented flat modules are projective.
\end{proof}

The next example reveals that a direct sum of dual $\pi$-Rickart
modules need not be dual $\pi$-Rickart.

\begin{ex}\label{orn3}{\rm Let $R$ denote the  ring $ \left(\begin{array}{ll}\Bbb
R&\Bbb R\\ 0&\Bbb R
\end{array}\right)$ and $M$ the $R$-module
$ \left (\begin{array}{cc}\Bbb R & \Bbb R\\\Bbb R & \Bbb R
\end{array}\right)$.
Let $f\in S$. Then there exist $a, c, u, t\in R$ such that
$f\left(\begin{array}{cc}x & y\\r & s\end{array}\right)=
\left(\begin{array}{cc}ax+ur & ay+us\\cx+tr &
cy+ts\end{array}\right)$ where $\left(\begin{array}{cc}x & y\\r &
s\end{array}\right)\in M$. Consider $f\in S$ defined by
 $a = c = 0$, $u = 3$ and $t = 2$. This implies  that
 $f\left(\begin{array}{cc}x & y\\r &
s\end{array}\right) = \left(\begin{array}{cc}3r & 3s\\2r &
2s\end{array}\right)$ and for any positive integer
 $n$ we obtain \begin{center} $f^n\left(\begin{array}{cc}x & y\\r &
s\end{array}\right) = \left(\begin{array}{cc}3(2^{n-1})r &
3(2^{n-1})s\\2^nr & 2^ns\end{array}\right)$\end{center} It follows
that $f^n M$ can not be a direct summand. On the other hand,
consider the submodules $N = \left(\begin{array}{ll}\Bbb R&\Bbb R\\
0&0\end{array}\right)$ and  $K = \left(\begin{array}{ll}0 & 0\\
\Bbb R & \Bbb R\end{array}\right)$ of $M$. Then End$_R(N)$ and
End$_R(K)$ are isomorphic to  $\Bbb R$. Hence $N$ and $K$ are dual
$\pi$-Rickart modules but $M$ is not dual $\pi$-Rickart.}
\end{ex}

The following lemma is useful to show that a direct sum of some
dual $\pi$-Rickart modules is dual $\pi$-Rickart.

\begin{lem}\label{abel} Let $M$ be a module  and $f\in S$. If Im$f^n = eM$ for
 some central idempotent
$e \in S$ and  a positive integer  $n$, then Im$f^{n+1}=eM$.
\end{lem}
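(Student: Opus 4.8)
The plan is to exploit centrality of $e$ together with the two containments that $\mathrm{Im}f^n = eM$ encodes. First I would record the elementary consequence of $\mathrm{Im}f^n = eM$: on the one hand $f^n = e f^n$ (since $f^n M \subseteq eM$ forces each $f^n(m) \in eM$, hence is fixed by $e$), and on the other hand $e \in \mathrm{Im}f^n$, so $e = f^n g$ for some $g \in S$. I will use centrality of $e$ freely, so $f^n g = g f^n = e$, etc. The containment $\mathrm{Im}f^{n+1} \subseteq eM$ is immediate: $f^{n+1}M = f(f^n M) = f(eM) \subseteq M$, and more precisely $e f^{n+1} = e f \cdot f^n = f e f^n = f f^n = f^{n+1}$ (using $ef = fe$), so every element of $f^{n+1}M$ is fixed by $e$ and thus lies in $eM$.

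The reverse containment $eM \subseteq \mathrm{Im}f^{n+1}$ is the only point requiring a small argument. Starting from $e = f^n g$ and multiplying by $f$ on the right, $ef = f^{n} g f = f^{n+1} (f^{-?}\dots)$ — no; instead multiply $e = f^n g$ by $e$ and use $e = e^2 = f^n g$: better, write $e = e \cdot e = f^n g \cdot e = f^n (g e)$, which just repeats what we have. The clean route: since $e$ is central, $e = e f^n g = f^n e g = f^n g e$, and also $f^n = f^n e = f \cdot f^{n-1} e$. Then
\[
e = f^n g = f \cdot f^{n-1} g = f \cdot (f^{n-1} g e) = f \cdot f^{n-1} \cdot (g e).
\]
To get an extra factor of $f$, I instead proceed as follows. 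From $f^n = e f^n$ and $e = f^n g$ we get $f^n = f^n g f^n$, so $f^n$ is "regular" with "inverse" $g$ after the $n$-th power. Now consider $h = g f g$. Then
\[
f^{n+1} h = f^{n+1} g f g = f^n (f g) f g
\]
— this is getting circular, so let me take the direct summand viewpoint instead: $M = eM \oplus (1-e)M$ with $e$ central, so both summands are fully invariant and $f$ restricts to an endomorphism $\bar f$ of $eM$; moreover $\bar f^n$ is onto $eM$ because $\mathrm{Im}f^n = eM$. An endomorphism of a module whose $n$-th power is surjective has all higher powers surjective as well, since surjectivity of $\bar f^n$ gives surjectivity of $\bar f$, whence of $\bar f^{n+1}$. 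Therefore $f^{n+1}M \supseteq \bar f^{n+1}(eM) = eM$, completing the equality $\mathrm{Im}f^{n+1} = eM$.

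The main obstacle, such as it is, is organizing the bookkeeping around which factor of $e$ is being used and making sure centrality is invoked wherever $e$ needs to commute past $f$; once one passes to the decomposition $M = eM \oplus (1-e)M$ and observes that $f$ respects it (thanks to centrality of $e$), the statement reduces to the trivial fact that surjectivity of a power of an endomorphism propagates upward. I would write the proof in that two-line form: (i) $e$ central $\Rightarrow f(eM) \subseteq eM$ and $\bar f := f|_{eM}$ satisfies $\mathrm{Im}\bar f^n = eM$; (ii) hence $\bar f$ and therefore $\bar f^{n+1}$ is onto $eM$, while $f^{n+1}M \subseteq eM$ as above, giving $\mathrm{Im}f^{n+1} = eM$.
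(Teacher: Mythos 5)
Your final argument is correct, and it packages the proof differently from the paper's. The paper argues by a direct element computation: for $f^n(x)\in \mathrm{Im}f^n$ one has $f^n(x)=ef^n(x)=f^ne(x)$ by centrality, and since $e(x)\in eM=\mathrm{Im}f^n$ one may write $e(x)=f^n(y)$, whence $f^n(x)=f^{2n}(y)\in\mathrm{Im}f^{n+1}$; this gives $\mathrm{Im}f^n\subseteq\mathrm{Im}f^{n+1}$, and the reverse inclusion is trivial. You instead use centrality of $e$ to make $eM$ a fully invariant direct summand, restrict $f$ to $\bar f\colon eM\to eM$, check that $\bar f^{\,n}$ is onto $eM$, and invoke the fact that surjectivity of a power forces surjectivity of the map itself and hence of all higher powers. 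Both proofs use centrality at the same essential point (commuting $e$ past $f$), so the gain of your version is mainly conceptual clarity; do write out the one-line verification $f^n(eM)=ef^nM=e(eM)=eM$, since a priori $f^n(eM)$ could be smaller than $f^n(M)$. One caution about your abandoned first attempt: the assertion that $\mathrm{Im}f^n=eM$ yields a factorization $e=f^ng$ with $g\in S$ is not justified --- equality of images as submodules does not provide a lift of $e$ through $f^n$ inside $S$ without some projectivity hypothesis --- but your final argument does not rely on it.
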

\begin{proof} Let $f\in S$ and  Im$f^n = eM$ for some central idempotent
$e \in S$ and  a positive integer  $n$. It is clear that
Im$f^{n+1}\subseteq$ Im$f^{n}$. Let  $f^n(x) \in$ Im$f^{n}$,
 then $f^{n}(x) = e f^{n}(x) = f^{n}e(x)$. Since $e(x)\in$ Im$f^{n}$,
 $e(x) = f^{n}(y)$ for some $y\in M$. So $f^{n}(x) = f^{n}(f^{n}(y))=f^{n+1}(f^{n-1}(y))\in$
  Im$f^{n+1}$. This
completes the proof.
\end{proof}

A ring $R$ is called {\it abelian} if every idempotent is central,
that is, $ae=ea$ for any $a, e^2=e  \in R$.  A module $M$ is
called {\it abelian} \cite{Ro} if $fem = efm$ for any $f\in S$,
$e^2 = e\in S$, $m\in M$. Note that $M$ is an abelian module if
and only if $S$ is an abelian ring. We now prove that a direct sum
of dual $\pi$-Rickart modules is dual $\pi$-Rickart for some
abelian modules.

\begin{prop} Let $M_1$ and $M_2$ be abelian $R$-modules.  If
$M_1$ and $M_2$ are dual $\pi$-Rickart with
 Hom$_R(M_i, M_j)=0$ for $i\neq j$, then $M_1\oplus M_2$ is a
dual $\pi$-Rickart module.
\end{prop}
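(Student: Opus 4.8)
The plan is to push everything down to the two endomorphism rings $S_1=\mathrm{End}_R(M_1)$ and $S_2=\mathrm{End}_R(M_2)$, using the hypothesis $\mathrm{Hom}_R(M_i,M_j)=0$ for $i\neq j$. First I would observe that this vanishing forces every $f\in S=\mathrm{End}_R(M_1\oplus M_2)$ to be ``diagonal'': since $f(M_1)\subseteq M_1\oplus M_2$ has zero component in $M_2$ (that component is an element of $\mathrm{Hom}_R(M_1,M_2)=0$), we get $f(M_1)\subseteq M_1$, and symmetrically $f(M_2)\subseteq M_2$. Hence $f=f_1\oplus f_2$ with uniquely determined $f_i\in S_i$, so that $f^n=f_1^n\oplus f_2^n$ and $\mathrm{Im}\,f^n=\mathrm{Im}\,f_1^n\oplus\mathrm{Im}\,f_2^n$ for every positive integer $n$. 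In ring language this is just the identification $S\cong S_1\times S_2$.

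Next, applying the dual $\pi$-Rickart property of $M_1$ to $f_1$ and of $M_2$ to $f_2$, I obtain idempotents $e_i=e_i^2\in S_i$ and positive integers $n_i$ with $\mathrm{Im}\,f_i^{\,n_i}=e_iM_i$. The obstacle here is that $n_1$ and $n_2$ need not agree, so the two direct-summand descriptions sit at different powers and cannot be merged as they stand. This is precisely where the abelian hypothesis is used: since $M_i$ is an abelian module, $S_i$ is an abelian ring, so each $e_i$ is central in $S_i$; therefore Lemma \ref{abel} applies and gives $\mathrm{Im}\,f_i^{\,n_i+1}=e_iM_i$, and inductively $\mathrm{Im}\,f_i^{\,m}=e_iM_i$ for every $m\geq n_i$. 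Setting $n=\max\{n_1,n_2\}$ we then have $\mathrm{Im}\,f_1^{\,n}=e_1M_1$ and $\mathrm{Im}\,f_2^{\,n}=e_2M_2$ simultaneously.

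Finally I would assemble the pieces. Put $e=e_1\oplus e_2\in S$; this is an idempotent of $S$ because $e_1$ and $e_2$ are idempotents of $S_1$ and $S_2$. Using the diagonal description from the first step,
\[
\mathrm{Im}\,f^{\,n}=\mathrm{Im}\,f_1^{\,n}\oplus\mathrm{Im}\,f_2^{\,n}=e_1M_1\oplus e_2M_2=(e_1\oplus e_2)(M_1\oplus M_2)=eM,
\]
which is exactly the condition required for $M_1\oplus M_2$ to be dual $\pi$-Rickart. I expect the only genuinely substantive point to be the equalization of exponents $n_1,n_2$, and that is handled cheaply by Lemma \ref{abel} once centrality of the $e_i$ is in hand; the remaining steps are routine bookkeeping with the product decomposition of $S$.
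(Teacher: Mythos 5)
Your proposal is correct and follows essentially the same route as the paper: identify $S$ with $S_1\times S_2$ via the vanishing of the cross Hom groups, apply the dual $\pi$-Rickart hypothesis componentwise, and use Lemma \ref{abel} (with centrality of the $e_i$ coming from the abelian hypothesis) to raise the smaller exponent so the two images can be read off at a common power. The paper phrases the last step as a three-way case split on $n$ versus $m$ rather than taking $n=\max\{n_1,n_2\}$ outright, but the argument is the same.
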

\begin{proof} Let  $S_1=$ End$_R(M_1)$, $S_2=$ End$_R(M_2)$ and $M=M_1\oplus M_2$.
 We may describe $S$ as $\left(\begin{array}{cc}S_1 &
0\\0 & S_2\end{array}\right)$. Let $\left(\begin{array}{cc}f_1 &
0\\0 & f_2\end{array}\right)\in S$, where $f_1\in S_1$ and $f_2\in
S_2$. Then there exist positive integers $n, m$ and $e_1^2=e_1\in
S_1$ and $e_2^2=e_2\in S_2$ such that Im$f_1^n=e_1M_1$ and
Im$f_2^m=e_2M_2$. Consider the following cases:

 ({\it i}) Let $n=m$. Obviously, Im$\left (\begin{array}{cc}f_1 &
0\\0 & f_2\end{array}\right)^n=\left(\begin{array}{cc}e_1 & 0\\0 &
e_2\end{array}\right)M$.

({\it ii}) Let $n<m$. By Lemma \ref{abel}, we have
Im$f_1^n=$Im$f_1^m=e_1M_1$. Clearly,
 $\left(\begin{array}{cc}e_1 & 0\\0 &
e_2\end{array}\right)M\leq$  Im$\left(\begin{array}{cc}f_1 & 0\\0
&
f_2\end{array}\right)^m$. Now let $\left(\begin{array}{c} m_1\\
m_2\end{array}\right)\in$ Im$\left( \begin{array}{cc}f_1 & 0\\0 &
f_2\end{array}\right)^m$. Then  $m_1\in$ Im$f_1^m=e_1M_1$ and
$m_2\in$ Im$f_2^m=e_2M_2$.  Hence  \linebreak $\left(\begin{array}{c} m_1\\
m_2\end{array}\right)= \left(\begin{array}{cc}e_1 & 0\\0 &
e_2\end{array}\right)\left(\begin{array}{c} m_1\\
m_2\end{array}\right)$. Thus $\left(\begin{array}{c} m_1\\
m_2\end{array}\right)\in \left(\begin{array}{cc}e_1 & 0\\0 &
e_2\end{array}\right)M$. Therefore Im$\left(
\begin{array}{cc}f_1 & 0\\0 & f_2\end{array}\right)^m \leq
\left(\begin{array}{cc}e_1 & 0\\0 & e_2\end{array}\right)M$.

({\it iii})  Let $m<n$. Since $M_2$ is abelian, the proof is
similar to case (ii).
 \end{proof}

We close this section with the relations among strongly co-Hopfian
modules, Fitting modules and dual $\pi$-Rickart modules.

Recall that a module $M$ is called {\it co-Hopfian}  if every
injective endomorphism of M is an automorphism, while $M$ is
called {\it strongly co-Hopfian} \cite{HKS}, if for any
endomorphism $f$ of $M$ the descending chain
\begin{center}
Im$f \supseteq $ Im$f^2 \supseteq \cdots \supseteq $  Im$f^n
\supseteq \cdots $ \end{center} stabilizes.

We now give a relation between abelian and strongly co-Hopfian
modules by using dual $\pi$-Rickart modules.

\begin{cor}\label{stcohop}  Let $M$ be a  dual $\pi$-Rickart module and $S$  an
abelian ring. Then $M$ is strongly co-Hopfian.
\end{cor}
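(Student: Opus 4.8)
The plan is to fix an endomorphism $f \in S$ and show that the descending chain $\operatorname{Im}f \supseteq \operatorname{Im}f^2 \supseteq \cdots$ stabilizes. Since $M$ is dual $\pi$-Rickart, there exist an idempotent $e \in S$ and a positive integer $n$ with $\operatorname{Im}f^n = eM$. Because $S$ is abelian, $e$ is central, so Lemma \ref{abel} applies and gives $\operatorname{Im}f^{n+1} = eM$. An immediate induction on $k \geq 0$ then yields $\operatorname{Im}f^{n+k} = eM$ for all $k$: the inductive step is exactly one more application of Lemma \ref{abel}, using that $e$ remains central. Hence $\operatorname{Im}f^n = \operatorname{Im}f^{n+1} = \operatorname{Im}f^{n+2} = \cdots$, so the chain of images of powers of $f$ stabilizes (from the $n$-th term onward).

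Since $f$ was arbitrary, this says precisely that every endomorphism of $M$ has a stabilizing descending chain of images, which is the definition of $M$ being strongly co-Hopfian. I would phrase the argument in that order: (1) invoke the dual $\pi$-Rickart property to get $e$ and $n$; (2) invoke abelianness to make $e$ central; (3) induct via Lemma \ref{abel} to stabilize the chain at stage $n$; (4) conclude from the definition of strongly co-Hopfian.

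There is essentially no obstacle here — the statement is a short corollary, and the only point requiring any care is making sure the induction in step (3) is legitimate, i.e. that the hypothesis of Lemma \ref{abel} (central idempotent, some positive exponent) is re-satisfied at each stage; it is, since $e$ is central independently of the exponent and the conclusion $\operatorname{Im}f^{n+k} = eM$ has exactly the form needed to feed back into the lemma. No additional hypotheses on $M$ or $R$ are needed beyond those already assumed.
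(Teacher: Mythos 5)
Your proof is correct and follows essentially the same route as the paper: both hinge on the abelianness of $S$ making the idempotent $e$ central so that Lemma \ref{abel} can be iterated to stabilize the chain of images. The paper merely cites Lemma \ref{abel} together with an external result from \cite{HKS}, whereas you spell out the induction and conclude directly from the definition of strongly co-Hopfian, which is a perfectly valid (and more self-contained) way to finish.
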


\begin{proof} It follows from Lemma \ref{abel} and
\cite[Proposition 2.6]{HKS}.
\end{proof}

A module $M$ is said to be {\it a Fitting module} \cite{HKS} if
for any $f \in S$, there exists an integer $n \geq 1$ such that
$M=$ Ker$f^n\oplus $ Im$f^n$. Due to Armendariz, Fisher and Snider
\cite{AFS} or \cite[Proposition 5.7]{Tu}, the module $M$ is
Fitting if and only if $S$ is  strongly $\pi$-regular.

We now give the following relation between Fitting modules and
dual $\pi$-Rickart modules.
\begin{cor}\label{fgric} Every Fitting module is a dual
$\pi$-Rickart module.
\end{cor}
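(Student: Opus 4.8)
The plan is to exploit the characterization of Fitting modules stated just before the corollary: a module $M$ is Fitting if and only if for every $f \in S$ there is an integer $n \geq 1$ with $M = \operatorname{Ker}f^n \oplus \operatorname{Im}f^n$. So I would start by fixing an arbitrary $f \in S$ and invoking the Fitting hypothesis to obtain such an $n$, giving the direct sum decomposition $M = \operatorname{Ker}f^n \oplus \operatorname{Im}f^n$.

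From that decomposition, the conclusion is essentially immediate: $\operatorname{Im}f^n$ is a direct summand of $M$, so there exists an idempotent $e^2 = e \in S$ (namely the projection of $M$ onto $\operatorname{Im}f^n$ along $\operatorname{Ker}f^n$) with $\operatorname{Im}f^n = eM$. This is exactly the defining condition for $M$ to be dual $\pi$-Rickart, with the positive integer $n$ supplied by the Fitting property. Since $f$ was arbitrary, $M$ is dual $\pi$-Rickart.

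I do not anticipate any real obstacle here — the corollary is a one-line consequence once the Fitting-module characterization is in hand, and that characterization has already been quoted in the excerpt (attributed to Armendariz--Fisher--Snider). The only minor point worth stating carefully is that a direct summand of a module is always an image of an idempotent endomorphism, so that the projection onto $\operatorname{Im}f^n$ along the complement $\operatorname{Ker}f^n$ genuinely lives in $S$ and is idempotent; this is standard and needs no computation. Thus the proof is: take $f \in S$, apply the Fitting property to get $M = \operatorname{Ker}f^n \oplus \operatorname{Im}f^n$ for some $n \geq 1$, let $e \in S$ be the corresponding projection onto $\operatorname{Im}f^n$, and observe $\operatorname{Im}f^n = eM$.
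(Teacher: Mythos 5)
Your proof is correct and is exactly the argument the paper intends: the corollary is stated without proof precisely because, once $M=\operatorname{Ker}f^n\oplus\operatorname{Im}f^n$ is available from the definition of a Fitting module, the projection onto $\operatorname{Im}f^n$ along $\operatorname{Ker}f^n$ is an idempotent $e\in S$ with $eM=\operatorname{Im}f^n$. Nothing further is needed.
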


Then we have the following result.

\begin{prop}\label{artin} Let $R$ be an Artinian ring.
Then every finitely generated $R$-module is dual $\pi$-Rickart.
\end{prop}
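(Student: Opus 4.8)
The plan is to reduce the statement to the already-proved Corollary \ref{fgric}, namely that every Fitting module is dual $\pi$-Rickart, by showing that a finitely generated module $M$ over an Artinian ring $R$ is a Fitting module. Recall that $M$ is Fitting precisely when its endomorphism ring $S$ is strongly $\pi$-regular, so equivalently I must verify that $S=\operatorname{End}_R(M)$ is strongly $\pi$-regular whenever $R$ is Artinian and $M$ is finitely generated.

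First I would use the structure of finitely generated modules over an Artinian ring: such a module $M$ has finite length (an Artinian ring is both left and right Artinian and Noetherian, so $R$ is a module of finite length over itself, hence any finitely generated, in particular any cyclic, $R$-module has finite length, and a finite direct sum of these does too). Then for any $f\in S$ the descending chain $\operatorname{Im}f\supseteq\operatorname{Im}f^2\supseteq\cdots$ and the ascending chain $\operatorname{Ker}f\subseteq\operatorname{Ker}f^2\subseteq\cdots$ both stabilize, so by the classical Fitting Lemma there is an integer $n\geq 1$ with $M=\operatorname{Ker}f^n\oplus\operatorname{Im}f^n$. Thus $M$ is a Fitting module, and Corollary \ref{fgric} finishes the argument. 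Alternatively, I could cite that a module of finite length has semiperfect (indeed semiprimary) endomorphism ring and that $f$ restricted to $\operatorname{Im}f^n$ is an automorphism, which is exactly the strong $\pi$-regularity condition $f^n = f^{n+1}g$ for a suitable $g\in S$.

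The key steps, in order, are: (1) observe $R$ Artinian $\Rightarrow$ $R$ has finite length as a right module over itself $\Rightarrow$ every finitely generated $R$-module has finite length; (2) invoke the Fitting Lemma for a module of finite length to get, for each $f\in S$, an $n$ with $M=\operatorname{Ker}f^n\oplus\operatorname{Im}f^n$; (3) conclude $M$ is a Fitting module; (4) apply Corollary \ref{fgric} to conclude $M$ is dual $\pi$-Rickart.

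The main obstacle, such as it is, is purely one of bookkeeping rather than genuine difficulty: making sure that "Artinian" here is strong enough to guarantee finite length of finitely generated modules (it is, since a right Artinian ring with identity is automatically right Noetherian by the Hopkins--Levitzki theorem, so $R_R$ has a composition series and hence so does every finitely generated $R$-module). Once finite length is in hand, the Fitting Lemma and the cited Corollary do all the work, so there is no serious computation to carry out.
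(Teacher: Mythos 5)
Your proof is correct and follows the same route as the paper: a finitely generated module over an Artinian ring is Artinian and Noetherian (hence of finite length), therefore a Fitting module by the classical Fitting Lemma, and Corollary \ref{fgric} then gives dual $\pi$-Rickartness. You simply spell out the finite-length and Hopkins--Levitzki details that the paper leaves implicit.
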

\begin{proof} Let  $M$ be  a finitely
generated $R$-module. Then $M$ is an Artinian and Noetherian
module. Hence $M$ is a Fitting module and so it is dual
$\pi$-Rickart.
\end{proof}

\begin{prop} Let $R$ be a ring and $n$ a positive integer. If the
matrix ring $M_n(R)$ is strongly $\pi$-regular, then $R^n$ is a
dual $\pi$-Rickart $R$-module.
\end{prop}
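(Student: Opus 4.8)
The plan is to route everything through the Fitting–module characterization quoted just above the statement. First I would observe the standard identification $S=\operatorname{End}_R(R^n)\cong M_n(R)$, where an endomorphism of the right module $R^n$ is given by left multiplication by an $n\times n$ matrix over $R$; this isomorphism is a ring isomorphism, so it transports the strong $\pi$-regularity hypothesis on $M_n(R)$ directly to $S$. Thus $S$ is strongly $\pi$-regular.

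Next I would invoke the result of Armendariz, Fisher and Snider (\cite{AFS}, or \cite[Proposition 5.7]{Tu}) recalled in the paragraph preceding the statement: a module $M$ is Fitting if and only if $\operatorname{End}_R(M)$ is strongly $\pi$-regular. Applying this with $M=R^n$ and using the previous step, $R^n$ is a Fitting module, i.e.\ for every $f\in S$ there is an integer $n_f\geq 1$ with $R^n=\operatorname{Ker}f^{\,n_f}\oplus\operatorname{Im}f^{\,n_f}$. In particular $\operatorname{Im}f^{\,n_f}$ is a direct summand of $R^n$ for a suitable power, which is exactly what is needed.

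Finally I would appeal to Corollary \ref{fgric}, which states that every Fitting module is dual $\pi$-Rickart; hence $R^n$ is dual $\pi$-Rickart, completing the proof. I do not expect a genuine obstacle here: the argument is a short composition of three facts already available in the excerpt, and the only point demanding any care is checking that the ring isomorphism $\operatorname{End}_R(R^n)\cong M_n(R)$ is the one that makes the hypothesis on $M_n(R)$ usable — which it is, since the hypothesis and conclusion of the Armendariz–Fisher–Snider criterion are both ring-theoretic properties of $\operatorname{End}_R(M)$.
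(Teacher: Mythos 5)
Your proposal is correct and follows essentially the same route as the paper: establish that $R^n$ is a Fitting module and then apply Corollary \ref{fgric}. The only cosmetic difference is that the paper cites \cite[Corollary 3.6]{HKS} for the Fitting property directly, whereas you derive it from the identification $\operatorname{End}_R(R^n)\cong M_n(R)$ together with the Armendariz--Fisher--Snider criterion, which amounts to the same argument.
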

\begin{proof} Let $M_n(R)$ be a strongly $\pi$-regular ring. Then
by \cite[Corollay 3.6]{HKS}, $R^n$ is a Fitting $R$-module and so
it is dual $\pi$-Rickart.
\end{proof}

\section{The Endomorphism Ring of a dual $\pi$-Rickart Module}

In this section we study relations between a dual $\pi$-Rickart
module and its endomorphism ring. We prove that the endomorphism
ring of a dual $\pi$-Rickart module is always a generalized left
principally projective ring, the converse holds if the module is
self-cogenerator. The modules whose endomorphism rings are
$\pi$-regular are characterized. It is shown that if the module
satisfies $D_2$ condition, then it is dual $\pi$-Rickart if and
only if the endomorphism ring of the module is a $\pi$-regular
ring.

\begin{lem}\label{grpp} If $M$ is a dual $\pi$-Rickart module, then $S$ is a generalized
left principally projective ring.
\end{lem}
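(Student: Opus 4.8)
The plan is to translate the "dual $\pi$-Rickart" condition for $M$ into the generalized-left-principally-projective condition for $S$, by showing that for every $f \in S$ the left annihilator $l_S(f^n)$ is generated by an idempotent for a suitable $n$. First I would fix $f \in S$ and use the dual $\pi$-Rickart hypothesis to get $e^2 = e \in S$ and a positive integer $n$ with $\operatorname{Im} f^n = eM$. The natural guess is that $1-e$ generates $l_S(f^n)$, so the first step is to verify $(1-e)S \subseteq l_S(f^n)$: since $\operatorname{Im} f^n = eM$, we have $e f^n = f^n$, hence $(1-e) f^n = 0$, and therefore any element $(1-e)g$ kills $f^n$ on the left (note $f^n$ acts, so really we want $(1-e)f^n = 0$ giving $(1-e)\in l_S(f^n)$, and then $(1-e)S \subseteq Sl_S(f^n)\subseteq l_S(f^n)$ — actually $l_S(f^n)$ is a left ideal, so I want $S(1-e)\subseteq l_S(f^n)$; let me instead check $S(1-e)$: for $g\in S$, $(g(1-e))f^n = g((1-e)f^n) = 0$, so $S(1-e)\subseteq l_S(f^n)$).

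The reverse inclusion $l_S(f^n) \subseteq S(1-e)$ is the step I expect to be the main obstacle, because it requires producing, from an arbitrary $g$ with $gf^n = 0$, a factorization $g = g(1-e)$, i.e. $ge = 0$. Here I would exploit $\operatorname{Im} f^n = eM$ directly: since $eM = \operatorname{Im} f^n = f^n M$, every element of $eM$ has the form $f^n(m)$; thus $ge(x) \in g(eM) = g(f^nM) = (gf^n)(M) = 0$ for all $x \in M$, so $ge = 0$, which gives $g = g(1-e) \in S(1-e)$. Combining the two inclusions yields $l_S(f^n) = S(1-e)$ with $(1-e)^2 = 1-e$, which is exactly the assertion that $S$ is generalized left principally projective.

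I would then note that the only subtlety is the appearance of $n$: the definition of generalized left principally projective allows the exponent to depend on $f$, and that is precisely what the dual $\pi$-Rickart property supplies, so no uniformity is needed. No appeal to earlier results is required beyond the definitions, though one could phrase the argument as an application of the characterization of $\operatorname{Im} f^n$ being a direct summand (Lemma~\ref{split}); I would keep it self-contained. The write-up is short: fix $f$, invoke the hypothesis, check the two inclusions using $ef^n = f^n$ and $eM = f^nM$, and conclude.
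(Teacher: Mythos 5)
Your proposal is correct and follows essentially the same route as the paper: the paper's proof is just the condensed identity $l_S(f^n)=l_S(f^nM)=l_S(eM)=S(1-e)$, and your two inclusions (using $(1-e)f^n=0$ for one direction and $g(eM)=g(f^nM)=0$ for the other) are exactly the verification of that identity. Nothing is missing.
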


\begin{proof} Let $f\in S$. By assumption, there exist $e^{2}=e\in S$ and
 a positive integer $n$  such that Im$f^{n}=eM$. Hence $l_{S}(f^{n}M)=S(1-e)=l_{S}(f^{n})$.
Thus $S$ is a  generalized left principally projective ring.
\end{proof}

The next result is a consequence of Theorem \ref{diksub} and Lemma
\ref{grpp}.

\begin{cor} If $R$ is a $\pi$-regular ring,
then $eRe$ is a generalized left principally projective ring for
any $e^2=e\in R$.
\end{cor}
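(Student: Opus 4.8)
The plan is to combine Proposition \ref{pi}, Proposition \ref{diksub}, and Lemma \ref{grpp}. First I would recall that by Proposition \ref{pi}, since $R$ is $\pi$-regular, the right $R$-module $R_R$ is dual $\pi$-Rickart. Next, for any idempotent $e^2=e\in R$, the cyclic module $eR$ is a direct summand of $R_R$, so by Proposition \ref{diksub} the module $eR$ is itself dual $\pi$-Rickart. Finally, applying Lemma \ref{grpp} to the module $M=eR$, its endomorphism ring $\mathrm{End}_R(eR)$ is a generalized left principally projective ring.

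The one computational point to nail down is the standard ring isomorphism $\mathrm{End}_R(eR)\cong eRe$, which sends an endomorphism $\varphi$ to the element $\varphi(e)\in eR$; since $\varphi(e)=\varphi(e\cdot e)=\varphi(e)e$ one checks $\varphi(e)\in eRe$, and conversely left multiplication by an element of $eRe$ gives an $R$-linear endomorphism of $eR$. This is entirely routine, and with it the conclusion $eRe$ is generalized left principally projective follows immediately from the previous paragraph. I would phrase the whole argument in two or three sentences: identify $eR$ as a direct summand of the dual $\pi$-Rickart module $R_R$, invoke Proposition \ref{diksub} to get that $eR$ is dual $\pi$-Rickart, and then invoke Lemma \ref{grpp} together with $\mathrm{End}_R(eR)\cong eRe$.

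There is essentially no obstacle here; the statement is a direct corollary, and the phrase ``Theorem \ref{diksub}'' in the lead-in to the corollary is simply a typo for Proposition \ref{diksub}. The only thing worth being slightly careful about is making the $\mathrm{End}_R(eR)\cong eRe$ identification explicit, since Lemma \ref{grpp} is stated about $S=\mathrm{End}_R(M)$ rather than about a corner ring directly; once that identification is in place the proof is a one-liner.

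\begin{proof}
Since $R$ is $\pi$-regular, the right $R$-module $R_R$ is dual $\pi$-Rickart by Proposition \ref{pi}. For $e^2=e\in R$, the module $eR$ is a direct summand of $R_R$, hence $eR$ is a dual $\pi$-Rickart module by Proposition \ref{diksub}. Now $\mathrm{End}_R(eR)\cong eRe$ via $\varphi\mapsto\varphi(e)$, and by Lemma \ref{grpp} the ring $\mathrm{End}_R(eR)$ is generalized left principally projective. Therefore $eRe$ is a generalized left principally projective ring.
\end{proof}
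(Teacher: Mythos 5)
Your proof is correct and follows exactly the route the paper intends (the corollary is stated as "a consequence of Theorem \ref{diksub} and Lemma \ref{grpp}", i.e.\ $eR$ is a dual $\pi$-Rickart direct summand of $R_R$ and $\mathrm{End}_R(eR)\cong eRe$). Your explicit verification of the isomorphism $\mathrm{End}_R(eR)\cong eRe$ is a welcome addition, since the paper leaves that identification implicit.
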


\begin{cor} Let $M$ be a dual $\pi$-Rickart module and $f\in S$. Then $Sf^n$ is a projective left
$S$-module for some positive integer $n$.
\end{cor}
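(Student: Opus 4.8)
The plan is to deduce this directly from Lemma~\ref{grpp} together with the standard characterization of when a principal one-sided ideal generated by an idempotent is projective. First I would invoke Lemma~\ref{grpp}: since $M$ is dual $\pi$-Rickart, $S$ is a generalized left principally projective ring, so for the given $f \in S$ there is a positive integer $n$ and an idempotent $e^2 = e \in S$ with $l_S(f^n) = S(1-e)$. In fact, tracing the proof of Lemma~\ref{grpp}, the idempotent comes from $\operatorname{Im} f^n = eM$, and one gets the sharper fact that $f^n = e f^n$ (since $f^n M = eM$ gives $f^n = e f^n$), i.e. $f^n \in eS \cap Se$ up to the annihilator bookkeeping. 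The key point I want is that $Sf^n = Se$ as left $S$-modules, and $Se$ is projective because it is a direct summand of ${}_S S$.

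The main step is therefore to show $Sf^n = Se$. For $\supseteq$: from $\operatorname{Im} f^n = eM$ we have $e \in f^n S$, say $e = f^n x$ for some $x \in S$; but I actually need $e \in S f^n$, which requires a left-sided argument. Here is where some care is needed: dual $\pi$-Rickartness gives information about images, hence about left annihilators, so the natural projective module is a \emph{left} ideal. From $l_S(f^n) = S(1-e)$ one gets that the left $S$-module map $S \to Sf^n$, $s \mapsto sf^n$, has kernel $S(1-e)$, hence induces an isomorphism $S/S(1-e) \cong Sf^n$. Since $S/S(1-e) \cong Se$ as left $S$-modules (the standard isomorphism for a two-sided decomposition induced by an idempotent), we conclude $Sf^n \cong Se$, and $Se$ is a direct summand of ${}_SS$, hence projective. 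That handles it without ever needing $e \in Sf^n$ literally.

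So the argument I would write is: apply Lemma~\ref{grpp} to get $n$ and $e = e^2 \in S$ with $l_S(f^n) = S(1-e)$; consider the left $S$-module epimorphism $\lambda\colon {}_SS \to Sf^n$ given by right multiplication by $f^n$; observe $\ker \lambda = l_S(f^n) = S(1-e)$; conclude $Sf^n \cong S/S(1-e) \cong Se$ as left $S$-modules; and note $Se$ is projective as a direct summand of the free module ${}_SS = Se \oplus S(1-e)$. Therefore $Sf^n$ is a projective left $S$-module.

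The only place I expect any friction is making sure the isomorphism $S/S(1-e) \cong Se$ is stated at the right level of generality — it is completely standard (send $\bar s \mapsto se$, with inverse $se \mapsto \overline{se}$, using $(1-e)e = 0$), so no abelian-ness or other hypothesis on $S$ is needed. One could alternatively phrase the whole thing as: right multiplication by $f^n$ splits because $S(1-e)$ is a direct summand of ${}_SS$, so $Sf^n$ is isomorphic to the complementary summand $Se$. Either way the proof is two or three lines, and the substance is entirely contained in Lemma~\ref{grpp}; this corollary is really just repackaging "generalized left principally projective" in module-theoretic language.
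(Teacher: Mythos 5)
Your proof is correct and is essentially the paper's own argument: the paper proves this corollary by noting $Sf^n \cong S/l_S(f^n)$ and invoking Lemma~\ref{grpp}, which is exactly your right-multiplication map $\lambda$ combined with $l_S(f^n)=S(1-e)$ and the standard isomorphism $S/S(1-e)\cong Se$. You have merely spelled out the details the paper leaves as ``clear.''
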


\begin{proof} Clear from Lemma \ref{grpp}, since $Sf^n \cong
S/l_S(f^n)$. \end{proof}

Recall that a module is called {\it self-cogenerator} if it
cogenerates all its factor modules. The following result shows
that the converse of Lemma \ref{grpp} is true for self-cogenerator
modules. On the other hand, Theorem \ref{wis} generalizes the
result \cite[39.11]{Wi1}.

\begin{thm}\label{wis} Let $M$ be a module and $f\in S$.
\begin{enumerate}
\item[(1)] If $Sf^n$ is a projective left $S$-module for some positive integer $n$,
then the submodule
$N = \bigcap~ \{$Ker$g\mid g\in S,$ Im$f^n\leq$  Ker$g\}$ is a
direct summand of $M$.
\item[(2)] If $M$ is self-cogenerator and $S$ is a  generalized left
principally projective ring,
then $M$ is a dual $\pi$-Rickart module.
\end{enumerate}
\end{thm}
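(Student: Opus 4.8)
The plan is to prove the two parts separately, using the standard interplay between idempotents in $S$, direct summands of $M$, and annihilators.

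For part (1), I would start from the hypothesis that $Sf^n$ is projective. Since $Sf^n \cong S/l_S(f^n)$ as left $S$-modules, the short exact sequence $0 \to l_S(f^n) \to S \to Sf^n \to 0$ splits, so $l_S(f^n) = S(1-e)$ for some idempotent $e^2 = e \in S$; equivalently $l_S(f^n) = Se'$ with $e' = 1-e$ idempotent. The key observation is that $N = \bigcap\{\mathrm{Ker}\,g : g \in S,\ \mathrm{Im}f^n \le \mathrm{Ker}\,g\}$ is exactly $\bigcap\{\mathrm{Ker}\,g : g \in l_S(f^n)\}$, because $\mathrm{Im}f^n \le \mathrm{Ker}\,g$ iff $gf^n = 0$ iff $g \in l_S(f^n)$. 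Writing $l_S(f^n) = S(1-e)$, every such $g$ factors as $g = h(1-e)$, so $\mathrm{Ker}(1-e) \subseteq \mathrm{Ker}\,g$ for all relevant $g$, while $1-e$ itself lies in $l_S(f^n)$; hence $N = \mathrm{Ker}(1-e) = eM$, a direct summand. This is the cleanest route and mirrors the proof of \cite[39.11]{Wi1}.

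For part (2), suppose $M$ is self-cogenerator and $S$ is generalized left principally projective. Take $f \in S$; then $l_S(f^n) = Sg$ for some idempotent $g^2 = g \in S$ and some positive integer $n$. I want to show $\mathrm{Im}f^n = (1-g)M$. First, $gf^n = 0$ gives $f^n = (1-g)f^n$, so $\mathrm{Im}f^n \le (1-g)M$. For the reverse inclusion I would invoke the self-cogenerator hypothesis applied to the factor module $M/\mathrm{Im}f^n$: this gives an embedding $M/\mathrm{Im}f^n \hookrightarrow M^\Lambda$, equivalently a family of maps $\{h_\lambda\}$ with $\bigcap \mathrm{Ker}\,h_\lambda = \mathrm{Im}f^n$, where each $h_\lambda$ kills $\mathrm{Im}f^n$, i.e. $h_\lambda \in l_S(f^n) = Sg$. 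Thus each $h_\lambda = k_\lambda g$, so $\mathrm{Ker}\,g \subseteq \mathrm{Ker}\,h_\lambda$ for all $\lambda$, giving $\mathrm{Ker}\,g \subseteq \bigcap \mathrm{Ker}\,h_\lambda = \mathrm{Im}f^n$. Since $\mathrm{Ker}\,g = (1-g)M$, we conclude $(1-g)M \le \mathrm{Im}f^n$, hence equality, and $M$ is dual $\pi$-Rickart.

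The main obstacle is the reverse inclusion in part (2): showing $(1-g)M \subseteq \mathrm{Im}f^n$ genuinely needs the self-cogenerator hypothesis, and one must be careful to phrase ``self-cogenerator'' correctly as ``$M/\mathrm{Im}f^n$ embeds in a product of copies of $M$,'' and then extract from that embedding the crucial fact that $\mathrm{Im}f^n$ is an intersection of kernels of endomorphisms of $M$ — only then does membership of those endomorphisms in $l_S(f^n) = Sg$ do the work. Part (1) is comparatively routine once one identifies $N$ with $\mathrm{Ker}(1-e)$.
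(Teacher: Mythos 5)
Your proposal is correct and follows essentially the same route as the paper: in (1) you identify $N$ with the kernel of the idempotent generating $l_S(f^n)$ (the paper writes $l_S(f^n)=Se$ and shows $N=(1-e)M$, which is your argument up to swapping $e$ and $1-e$), and in (2) you use the self-cogenerator hypothesis to express ${\rm Im}f^n$ as an intersection of kernels of endomorphisms annihilating $f^n$ and then apply the idempotent, exactly as the paper does by reducing to part (1) via \cite[14.5]{Wi1}. No gaps.
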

\begin{proof} (1) Let $Sf^n$ be a projective left $S$-module for some positive integer $n$. Since $Sf^n\cong S/l_S(f^n)$,
$l_S(f^n) = Se$ for some $e^2 = e\in S$. We prove $(1 - e )M = N$.
Due to $ef^nM = 0$, we have $f^n M\leq (1 - e)M$. By definition of
$N$ we have $N\leq (1 - e)M$. Let $g\in S$ with Im$f^n \leq$
Ker$g$. Then $gf^nM = 0$ or $gf^n = 0$. Hence $g\in l_S(f^n) = Se$
and $ge = g$. So $g(1 - e)M = 0$ from which we have $(1 - e)M
\leq$ Ker$g$ for all $g$
with Im$f^n  \leq $ Ker$g$. Thus  $(1 - e)M\leq N$. Therefore $(1 - e)M = N$. \\
(2) Assume that $M$ is self-cogenerator and $S$ is generalized
left principally projective. There  exist $e^{2}=e\in S$,   a
positive integer $n$  such that $l_S(f^n) = Se$ and
 $M/$Im$f^n$ is cogenerated by $M$. By \cite[14.5]{Wi1},
\begin{center} $\bigcap~\{$ Ker$g\mid g\in$ Hom$(M/$Im$f^n, M)\} = 0$.
\end{center} Hence \begin{center} Im$f^n =  \bigcap~\{$ Ker$g\mid g\in
S,$ Im$f^n\leq$  Ker$g \}$.\end{center} Thus conditions of (1) are
satisfied and so Im$f^n$ is a direct summand.
\end{proof}

For an $R$-module $M$, it is shown that, if $S$ is a von Neumann
regular ring, then $M$ is a dual Rickart module (see
\cite[Proposition 3.8]{LRR1}). We obtain a similar result for dual
$\pi$-Rickart modules.

\begin{lem}\label{pireg}  Let $M$ be a  module. If $S$ is a $\pi$-regular ring, then $M$ is  dual
$\pi$-Rickart.
\end{lem}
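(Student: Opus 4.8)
The plan is to take an arbitrary $f \in S$ and use the $\pi$-regularity of $S$ to produce a positive integer $n$ and an idempotent whose image on $M$ is exactly $\mathrm{Im}\,f^n$, mimicking the ring-level argument already carried out in Proposition \ref{pi}. First I would invoke $\pi$-regularity: there exist a positive integer $n$ and $x \in S$ with $f^n = f^n x f^n$. Set $e = f^n x$; then $e^2 = f^n x f^n x = f^n x = e$, so $e$ is an idempotent of $S$.

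Next I would check that $eM = \mathrm{Im}\,f^n$ as submodules of $M$. For the inclusion $eM \subseteq \mathrm{Im}\,f^n$, note $e = f^n x$, so every element $em = f^n(xm)$ lies in $\mathrm{Im}\,f^n$. For the reverse inclusion, observe $f^n = f^n x f^n = e f^n$, so for any $m \in M$ we have $f^n(m) = e\big(f^n(m)\big) \in eM$. Hence $\mathrm{Im}\,f^n = eM$ with $e^2 = e \in S$, which is precisely the defining condition for $M$ to be dual $\pi$-Rickart.

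I do not expect any serious obstacle here; the statement is essentially the module-side shadow of the equivalence in Proposition \ref{pi}, with the ambient ring $S$ acting on $M$ in place of $R$ acting on itself. The only point requiring a moment of care is making sure $e$ is built from $f^n x$ (rather than $x f^n$) so that both $e^2 = e$ and $e f^n = f^n$ hold simultaneously, which is exactly what is needed to pin down the image. Since $f$ was arbitrary in $S$, this establishes that $M$ is dual $\pi$-Rickart.
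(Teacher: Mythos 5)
Your proof is correct and follows essentially the same route as the paper: both take $f^n = f^n x f^n$ from $\pi$-regularity, set $e = f^n x$, and verify $eM = \mathrm{Im}\,f^n$ via the two inclusions $e = f^n x$ and $f^n = e f^n$. No gaps.
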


\begin{proof}  Let $f\in S$. Since $S$ is $\pi$-regular, there exist a positive integer $n$ and $g\in S$
such that $f^n=f^ngf^n$. Then  $e=f^ng$ is an idempotent of $S$.
Now we show that Im$f^n=f^ngM$. It is clear that $f^{n}M=ef^{n}M\leq eM$. For the other inclusion, let $m\in M$. Hence $em=f^{n}gm\in f^{n}M$. Thus Im$f^{n}=eM$.
\end{proof}

Since every strongly $\pi$-regular ring is $\pi$-regular, we have
the next result.
\begin{cor}\label{prg}  Let $M$ be a  module. If $S$ is a strongly $\pi$-regular ring,
then $M$ is  dual
$\pi$-Rickart.
\end{cor}

The converse statement of Corollary \ref{prg} does not hold in
general, that is,  there exists a dual $\pi$-Rickart module having
not a strongly $\pi$-regular endomorphism ring.

\begin{ex} {\rm Let $D$ be a division ring, $M$ a vector space over $D$ with an
infinite basis $\{e_i\in M\mid i = 1,2,...\}$ and  $S =$
End$_D(M)$. As a semisimple right $D$-module, $M$ is dual
$\pi$-Rickart, and by \cite[3.9]{Wi1} $S$ is a regular and so
$\pi$-regular ring. Assume that $S$ is a strongly $\pi$-regular
ring and we reach a contradiction. Let $f\in S$ defined by $f(e_i)
= e_{i+1}$ for all $i = 1,2,3,....$ By assumption, there is a
positive integer $n$ such that $f^n = f^{n+1}g$ for some $g\in S$.
Then $f^n = f^{n+1}g$ implies $f^nS = f^{n+1}S$ and so $f^n M =
f^{n+1}M$. Since $f^n(e_i) = e_{i+n}$ for all $i$, we have $f^n M
= \sum\limits_{i>n}e_iD \neq f^{n+1} M$. This is a contradiction.
Hence $S$ is not a strongly $\pi$-regular ring (see also
\cite[5.5]{Tu}).}
\end{ex}

The proof of Lemma \ref{maybe} may be in the context.
\begin{lem}\label{maybe} Let $M$ be a module.
Then $S$ is a $\pi$-regular ring if and only if there exists a
positive integer $n$ such that Ker$f^n$ and Im$f^n$ are direct
summands of $M$ for any $f\in S$.
\end{lem}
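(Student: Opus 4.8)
\textbf{Proof proposal for Lemma \ref{maybe}.}
The plan is to prove both implications by passing between the idempotent-generated description of $f^nS$ (equivalently $Sf^n$) and the direct-summand status of $\operatorname{Ker}f^n$ and $\operatorname{Im}f^n$. For the necessity, suppose $S$ is $\pi$-regular and fix $f\in S$. Then there exist a positive integer $n$ and $g\in S$ with $f^n=f^ngf^n$, so $e=f^ng$ and $e'=gf^n$ are idempotents of $S$. As in the proof of Lemma \ref{pireg}, $\operatorname{Im}f^n=eM$, so $\operatorname{Im}f^n$ is a direct summand of $M$. For the kernel, I would check that $\operatorname{Ker}f^n=(1-e')M$: the inclusion $(1-e')M\subseteq\operatorname{Ker}f^n$ is immediate from $f^n(1-e')=f^n-f^ngf^n=0$, and conversely if $f^n(x)=0$ then $e'(x)=gf^n(x)=0$, so $x=(1-e')x\in(1-e')M$. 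Hence $\operatorname{Ker}f^n$ is also a direct summand of $M$, giving one direction.

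For the sufficiency, assume that for every $f\in S$ there is a positive integer $n$ such that both $\operatorname{Ker}f^n$ and $\operatorname{Im}f^n$ are direct summands of $M$; I must produce $g\in S$ with $f^n=f^ngf^n$ (possibly after enlarging $n$). Write $M=\operatorname{Ker}f^n\oplus K$ and $M=\operatorname{Im}f^n\oplus L$ for submodules $K,L$. The restriction $f^n|_K:K\to\operatorname{Im}f^n$ is then an isomorphism: it is onto because $f^nM=f^n(\operatorname{Ker}f^n\oplus K)=f^nK$, and it is one-to-one because $\operatorname{Ker}(f^n|_K)=K\cap\operatorname{Ker}f^n=0$. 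Let $h:\operatorname{Im}f^n\to K$ be its inverse, and define $g\in S$ by letting $g$ act as $h$ on $\operatorname{Im}f^n$ and as $0$ on $L$, using the decomposition $M=\operatorname{Im}f^n\oplus L$. Then for any $x\in M$, writing $f^n(x)\in\operatorname{Im}f^n$, we get $gf^n(x)=h(f^n(x))\in K$ with $f^n(gf^n(x))=f^n(h(f^n(x)))=f^n(x)$ because $h$ inverts $f^n|_K$ and $f^n\circ h=\mathrm{id}$ on $\operatorname{Im}f^n$. Hence $f^n=f^ngf^n$ and $S$ is $\pi$-regular.

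One subtlety I anticipate is that $\pi$-regularity of a ring is usually stated with the exponent depending on the element, so there is nothing to reconcile on that side; but in the hypothesis of the lemma the integer $n$ for which \emph{both} $\operatorname{Ker}f^n$ and $\operatorname{Im}f^n$ split may a priori differ from the ones for which each splits individually. I would handle this by noting that if $\operatorname{Im}f^k$ is a direct summand then, since $M$ is dual $\pi$-Rickart is not assumed, one cannot invoke Lemma \ref{abel} directly; instead I take the stated common $n$ at face value, which is exactly what the hypothesis provides. The main obstacle is therefore purely the sufficiency construction of $g$, specifically verifying that $f^n|_K$ is an isomorphism onto $\operatorname{Im}f^n$ and that extending its inverse by zero on a complement of $\operatorname{Im}f^n$ yields a genuine element of $S$ satisfying $f^n=f^ngf^n$; everything else is bookkeeping with the two direct-sum decompositions.
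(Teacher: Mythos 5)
Your proof is correct: the necessity direction recovers $\operatorname{Im}f^n=f^ngM$ and $\operatorname{Ker}f^n=(1-gf^n)M$ from a $\pi$-regularity equation $f^n=f^ngf^n$, and the sufficiency direction builds the quasi-inverse $g$ by inverting $f^n$ on a complement $K$ of $\operatorname{Ker}f^n$ and extending by zero on a complement of $\operatorname{Im}f^n$, which is exactly the standard argument. The paper itself states Lemma~\ref{maybe} without proof (remarking only that "the proof may be in the context"), so your write-up supplies precisely the argument the authors leave implicit, consistent with their own computation in Lemma~\ref{pireg}.
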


Now we  recall some known facts that will be needed about
$\pi$-regular rings.
\begin{lem}\label{dorsey} Let $R$ be a ring. Then
\begin{enumerate}
    \item[{\rm (1)}] If $R$ is $\pi$-regular, then $eRe$ is also
    $\pi$-regular  for any $e^2=e\in R$.
    \item[{\rm (2)}] If $M_n(R)$ is $\pi$-regular for any positive integer
    $n$, then so is $R$.
    \item[{\rm (3)}] If $R$ is a commutative ring, then $R$ is $\pi$-regular
    if and only if $M_n(R)$ is $\pi$-regular for any positive integer
    $n$.
\end{enumerate}
\end{lem}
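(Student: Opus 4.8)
The plan is to prove the three assertions in order: (1) and (2) are elementary corner-ring computations, while (3) reduces, via the Cayley--Hamilton theorem, to the behaviour of $\pi$-regularity under integral extensions of commutative rings. For (1), let $a\in eRe$; by $\pi$-regularity of $R$ choose a positive integer $n$ and $x\in R$ with $a^{n}=a^{n}xa^{n}$. Since $a\in eRe$ we have $a^{n}\in eRe$, so $a^{n}e=ea^{n}=a^{n}$; setting $y=exe\in eRe$ we get $a^{n}ya^{n}=a^{n}(exe)a^{n}=(a^{n}e)x(ea^{n})=a^{n}xa^{n}=a^{n}$ with $y\in eRe$, so $eRe$ is $\pi$-regular. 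For (2), recall $R\cong e_{11}M_{n}(R)e_{11}$, where $e_{11}$ is the $(1,1)$ matrix unit, an idempotent of $M_{n}(R)$; thus if $M_{n}(R)$ is $\pi$-regular for some $n$, applying (1) to this idempotent shows $R$ is $\pi$-regular (one value of $n$ suffices).

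For (3), the implication ``$M_{n}(R)$ $\pi$-regular for all $n$ $\Rightarrow$ $R$ $\pi$-regular'' is the instance of (2). For the converse I would first record the classical fact that a commutative ring $R$ is $\pi$-regular if and only if it has Krull dimension zero. Indeed, if $R$ is commutative and $\pi$-regular, then commutativity turns $a^{n}=a^{n}xa^{n}$ into $a^{n}=a^{2n}x=a^{n+1}(a^{n-1}x)$, so $R$ is strongly $\pi$-regular; hence in each domain $R/P$ every element is zero or a unit, so $R/P$ is a field and $P$ is maximal. Conversely, if $\dim R=0$ then $N:=$ Nil$(R)$ has the same prime spectrum, so $R/N$ is reduced of dimension zero, hence von Neumann regular; writing $\bar a=\bar a^{2}\bar c$ in $R/N$ gives $a-a^{2}c\in N$, so $a^{m}(1-ac)^{m}=(a-a^{2}c)^{m}=0$ for some $m$, and expanding $(1-ac)^{m}=1-ad$ yields $a^{m}=a^{m+1}d$, so $R$ is $\pi$-regular. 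Now, given $A\in M_{n}(R)$, consider the commutative subring $R[A]\subseteq M_{n}(R)$ generated by $A$ and the scalar matrices. By Cayley--Hamilton $A$ is integral over the scalar copy of $R$, so $R[A]$ is a finitely generated $R$-module, hence an integral extension of $R$; as $\dim R=0$, incomparability and lying-over force $\dim R[A]=0$, so $R[A]$ is $\pi$-regular. Therefore $A$ has a pseudo-inverse already inside $R[A]\subseteq M_{n}(R)$, and $M_{n}(R)$ is $\pi$-regular.

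The routine parts are (1), (2) and the backward implication of (3); the real content, and the step I expect to be the main obstacle, is the forward direction of (3) --- reducing $\pi$-regularity of $M_{n}(R)$ to that of the \emph{commutative} ring $R$. This uses the identification of commutative $\pi$-regularity with Krull dimension zero together with the invariance of dimension zero under integral extensions, and it genuinely relies on commutativity of $R$ (to make $R[A]$ commutative and to invoke Cayley--Hamilton); since, as the paper states, these are known facts, one may alternatively just cite them.
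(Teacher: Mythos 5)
Your proof is correct. Note that the paper itself gives no argument for this lemma at all: it is introduced with ``we recall some known facts'' and stated without proof, so there is no internal proof to compare against --- your write-up supplies exactly what the paper leaves to the literature. Your parts (1) and (2) are the standard corner-ring computation (the key point being that $a\in eRe$ forces $a^{n}e=ea^{n}=a^{n}$, so $y=exe$ works, and $R\cong e_{11}M_{n}(R)e_{11}$ handles (2)); these are airtight. For the forward direction of (3), your route --- commutative $\pi$-regular $\Leftrightarrow$ Krull dimension zero, then Cayley--Hamilton to make $R[A]$ a module-finite, hence integral, hence zero-dimensional commutative extension of $R$, so that $A$ acquires a pseudo-inverse already inside $R[A]\subseteq M_{n}(R)$ --- is a correct and essentially standard proof of this known result, and it correctly isolates where commutativity is used (both for the dimension-theoretic characterization and for Cayley--Hamilton); the noncommutative analogue is precisely what fails to be known. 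The small steps you compress (reduced zero-dimensional commutative rings are von Neumann regular; $(1-ac)^{m}=1-ad$; $\dim B=\dim A'$ for integral extensions via going-up and incomparability) are all standard and correctly invoked.
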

\begin{prop}\label{fgproj}
Let $R$ be a commutative $\pi$-regular ring. Then every finitely
generated projective $R$-module is dual $\pi$-Rickart.
\end{prop}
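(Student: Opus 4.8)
The plan is to reduce the statement to the matrix-ring facts about $\pi$-regular rings collected in Lemma \ref{dorsey} together with the already-established implication ``$S$ $\pi$-regular $\Rightarrow$ $M$ dual $\pi$-Rickart'' from Lemma \ref{pireg}. So the whole point is to show that if $R$ is commutative $\pi$-regular and $M$ is a finitely generated projective $R$-module, then $S = \mathrm{End}_R(M)$ is $\pi$-regular; the conclusion then follows immediately by Lemma \ref{pireg}.

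First I would use that $M$ is finitely generated projective over $R$, hence a direct summand of a free module $R^n$ for some positive integer $n$. Write $R^n = M \oplus N$ and let $e \in M_n(R) = \mathrm{End}_R(R^n)$ be the idempotent with $eR^n = M$; then $S = \mathrm{End}_R(M) \cong e M_n(R) e$. Next, since $R$ is commutative and $\pi$-regular, Lemma \ref{dorsey}(3) gives that $M_n(R)$ is $\pi$-regular. Finally, apply Lemma \ref{dorsey}(1) to the idempotent $e \in M_n(R)$ to conclude that $e M_n(R) e \cong S$ is $\pi$-regular. Then Lemma \ref{pireg} yields that $M$ is dual $\pi$-Rickart.

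I expect the only genuine subtlety to be the identification $S \cong e M_n(R) e$, which is the standard fact that $\mathrm{End}_R(eR^n) \cong e\,\mathrm{End}_R(R^n)\,e$ for an idempotent $e$; this is routine and I would simply invoke it (it is used implicitly throughout the literature on corner rings, e.g.\ in the spirit of \cite[3.9]{Wi1}). The remaining steps are pure citations of Lemma \ref{dorsey} parts (3) and (1), so there is really no computational obstacle; the proof is essentially a one-line chain of reductions once the corner-ring isomorphism is in hand. A short written version would read: ``Since $M$ is finitely generated projective, $M$ is a direct summand of $R^n$ for some $n$, say $R^n = M \oplus N$, and so $S \cong eM_n(R)e$ where $e = e^2 \in M_n(R)$ is the projection onto $M$. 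By Lemma \ref{dorsey}(3), $M_n(R)$ is $\pi$-regular because $R$ is commutative and $\pi$-regular, and then $eM_n(R)e$ is $\pi$-regular by Lemma \ref{dorsey}(1). Hence $S$ is $\pi$-regular and $M$ is dual $\pi$-Rickart by Lemma \ref{pireg}.''
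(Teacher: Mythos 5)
Your proof is correct and follows essentially the same route as the paper: identify $S$ with the corner ring $eM_n(R)e$, invoke Lemma \ref{dorsey}(3) and (1) to get that $S$ is $\pi$-regular, and conclude by Lemma \ref{pireg}. Your write-up is in fact slightly more explicit than the paper's about the corner-ring identification.
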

\begin{proof} Let $M$ be a finitely generated  projective $R$-module.
So the endomorphism ring of
$M$ is $eM_n(R)e$ with some positive integer $n$ and an idempotent
$e$ in $M_n(R)$. Since $R$ is commutative  $\pi$-regular, $M_n(R)$
is also  $\pi$-regular, and so is $eM_n(R)e$ by Lemma
\ref{dorsey}. Hence $M$ is dual $\pi$-Rickart  by Lemma
\ref{pireg}.
\end{proof}

\begin{thm}\label{D2}  Let $M$ be a  module with $D_2$ condition.
Then $M$ is dual $\pi$-Rickart if
and only if $S$ is  $\pi$-regular.
\end{thm}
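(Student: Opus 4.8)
\textbf{Proof proposal for Theorem \ref{D2}.} The plan is to prove the two implications separately, with Lemma \ref{pireg} handling one direction immediately and the $D_2$ condition doing the work in the other. The easy direction is the ``if'' part: if $S$ is $\pi$-regular, then $M$ is dual $\pi$-Rickart by Lemma \ref{pireg}, and no use of $D_2$ is needed here. So the entire content lies in proving that when $M$ has $D_2$ and is dual $\pi$-Rickart, the ring $S$ is $\pi$-regular. Fix $f \in S$. By dual $\pi$-Rickartness there is a positive integer $n$ and an idempotent $e \in S$ with $\mathrm{Im}f^n = eM$. I want to produce $g \in S$ with $f^n = f^n g f^n$, which is exactly $\pi$-regularity of $S$ (applied to $f$, with exponent $n$).

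The key step is to use $D_2$ to split the kernel as well. We always have $M/\mathrm{Ker}f^n \cong \mathrm{Im}f^n = eM$, which is a direct summand of $M$; hence by the $D_2$ condition $\mathrm{Ker}f^n$ is a direct summand of $M$, say $M = \mathrm{Ker}f^n \oplus L$ for some submodule $L$. Then $f^n$ restricts to an isomorphism $L \xrightarrow{\ \sim\ } f^n L = f^n M = eM$. Write $\pi : M \to L$ for the projection along $\mathrm{Ker}f^n$ and let $(f^n|_L)^{-1} : eM \to L$ be the inverse of the restricted iso; composing with the projection $e : M \to eM$ gives a map $h = (f^n|_L)^{-1} \circ e : M \to L \hookrightarrow M$, i.e. an element $h \in S$. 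For any $m \in M$, decompose $m = k + \ell$ with $k \in \mathrm{Ker}f^n$, $\ell \in L$; then $f^n m = f^n \ell$, and $h(f^n m) = (f^n|_L)^{-1}(e f^n \ell) = (f^n|_L)^{-1}(f^n \ell) = \ell$ (using $e f^n = f^n$ since $f^nM = eM$), so $f^n h f^n (m) = f^n \ell = f^n m$. Thus $f^n = f^n h f^n$ and $S$ is $\pi$-regular.

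The main obstacle is making sure the $D_2$ application is legitimate and the algebra with the idempotent $e$ is clean: one must check $eM = f^nM$ forces $ef^n = f^n$ (immediate, since $f^n m \in eM$ for all $m$), and that $f^n|_L$ really is injective (if $\ell \in L$ and $f^n\ell = 0$ then $\ell \in \mathrm{Ker}f^n \cap L = 0$) and surjective onto $f^nM$ (since $f^nM = f^n(\mathrm{Ker}f^n \oplus L) = f^n L$). These are all routine once the direct summand $L$ is in hand, so the real leverage is entirely the single invocation of $D_2$ via the isomorphism $M/\mathrm{Ker}f^n \cong \mathrm{Im}f^n$. I would also remark, as the statement of Theorem \ref{gizem}(1) already notes this isomorphism, that the proof is essentially forcing both $\mathrm{Im}f^n$ and $\mathrm{Ker}f^n$ to be summands and then building the quasi-inverse; alternatively one could cite Lemma \ref{maybe}, since $D_2$ plus dual $\pi$-Rickartness gives exactly ``$\mathrm{Ker}f^n$ and $\mathrm{Im}f^n$ are direct summands of $M$ for any $f$,'' which by Lemma \ref{maybe} is equivalent to $S$ being $\pi$-regular, thereby shortening the argument to two lines.
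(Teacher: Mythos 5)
Your proof is correct and follows essentially the same route as the paper: one direction is Lemma \ref{pireg}, and the other uses $M/\mathrm{Ker}f^n \cong \mathrm{Im}f^n$ together with $D_2$ to make $\mathrm{Ker}f^n$ a direct summand, at which point the paper simply cites Lemma \ref{maybe}. Your explicit construction of the quasi-inverse $h$ with $f^n = f^n h f^n$ is just an unpacking of that lemma (which the paper leaves unproved), so it is a welcome but not divergent addition.
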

\begin{proof}  The necessity holds by Lemma \ref{pireg}. For the
sufficiency, let  $0\neq f\in S$. Since $M$ is dual $\pi$-Rickart,
Im$f^n$ is a direct summand of $M$ for some positive integer $n$.
Because of $M/$Ker$f^n \cong $ Im$f^n$, D$_2$ condition implies
that Ker$f^n$ is a direct summand of $M$. The rest is obvious from
Lemma \ref{maybe}.
\end{proof}

The following is a  consequence of Proposition \ref{fgproj} and
Theorem \ref{D2}.
\begin{cor} Let $R$ be a commutative ring and satisfy $D_2$
condition. Then the following are equivalent.\begin{enumerate}
    \item [{\rm (1)}] $R$ is a $\pi$-regular ring.
    \item [{\rm (2)}] Every finitely generated projective
    $R$-module is dual $\pi$-Rickart.
\end{enumerate}
\end{cor}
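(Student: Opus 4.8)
The plan is to verify the two implications separately, each being essentially immediate from a result already established in the paper, so the proof will be very short.

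For $(1) \Rightarrow (2)$: assuming $R$ is commutative and $\pi$-regular, I would simply invoke Proposition~\ref{fgproj}, which states precisely that every finitely generated projective module over a commutative $\pi$-regular ring is dual $\pi$-Rickart. Nothing else is needed in this direction; note that the $D_2$ hypothesis is not even used here, only commutativity (which enters through Lemma~\ref{dorsey} on matrix rings over commutative $\pi$-regular rings, as used in the proof of Proposition~\ref{fgproj}).

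For $(2) \Rightarrow (1)$: the key observation is that $R_R$ itself is a finitely generated (indeed cyclic free) projective $R$-module, hence by hypothesis $(2)$ it is dual $\pi$-Rickart. Since $R$ is assumed to satisfy the $D_2$ condition as a right module over itself, Theorem~\ref{D2} applies and gives that $S = \mathrm{End}_R(R)$ is a $\pi$-regular ring; via the canonical isomorphism $S \cong R$ we conclude that $R$ is $\pi$-regular. (Alternatively, one could short-circuit this and apply Proposition~\ref{pi} directly, since that proposition already characterizes $\pi$-regularity of $R$ by dual $\pi$-Rickart-ness of the module $R_R$; but routing through Theorem~\ref{D2} keeps the corollary phrased as the announced consequence of Proposition~\ref{fgproj} and Theorem~\ref{D2}.)

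I do not expect any real obstacle: both directions reduce to citing earlier results. The only points requiring a word of care are the identification $\mathrm{End}_R(R) \cong R$ and the reading of ``$R$ satisfies the $D_2$ condition'' as ``$R_R$ has the $D_2$ condition,'' so that Theorem~\ref{D2} is legitimately applicable to $M = R_R$; both are routine.
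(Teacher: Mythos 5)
Your proposal is correct and matches the paper's intended argument exactly: the paper offers no written proof, stating only that the corollary "is a consequence of Proposition~\ref{fgproj} and Theorem~\ref{D2}," which is precisely the route you take for $(1)\Rightarrow(2)$ and $(2)\Rightarrow(1)$ respectively. Your parenthetical remark that Proposition~\ref{pi} already yields $(2)\Rightarrow(1)$ without any $D_2$ hypothesis is a valid and slightly sharper observation, but it does not change the substance.
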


Recall that a module $M$ is called {\it quasi-projective} if  it
is $M$-projective. Since every quasi-projective module has $D_2$
condition, we have the following.

\begin{cor}\label{prgd} If $M$ is a quasi-projective dual $\pi$-Rickart module,
then the endomorphism ring  of $M$ is a $\pi$-regular ring.
\end{cor}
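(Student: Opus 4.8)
The plan is to deduce Corollary \ref{prgd} as a direct consequence of Theorem \ref{D2} by observing that a quasi-projective module automatically satisfies the $D_2$ condition. First I would recall the definition: a module $M$ is quasi-projective if it is $M$-projective, and I would invoke the standard fact (as noted in the remark preceding the corollary) that every quasi-projective module has the $D_2$ condition. Concretely, if $N \leq M$ with $M/N$ isomorphic to a direct summand $K$ of $M$, then the composite $M \twoheadrightarrow M/N \cong K \hookrightarrow M$ lifts along the projection $M \twoheadrightarrow M/N$ precisely because $M$ is $M$-projective, and the splitting this produces forces $N$ to be a direct summand of $M$.

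Given that $M$ has $D_2$, the corollary follows immediately: by hypothesis $M$ is dual $\pi$-Rickart and has $D_2$, so the necessity direction of Theorem \ref{D2} yields that $S = \mathrm{End}_R(M)$ is $\pi$-regular. That is the entire argument; no further computation is needed.

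I do not anticipate any real obstacle here, since all the substantive work has been done in Theorem \ref{D2} and in the standard module-theoretic fact about quasi-projectivity. The only point requiring a word of care is making sure the $D_2$ implication is cited rather than reproved — the paper's preceding sentence already asserts ``every quasi-projective module has $D_2$ condition,'' so I would simply lean on that. Thus the proof reads essentially: ``Since $M$ is quasi-projective, it has the $D_2$ condition; now apply Theorem \ref{D2}.''

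\begin{proof}
Since $M$ is quasi-projective, it satisfies the $D_2$ condition. As $M$ is dual $\pi$-Rickart, Theorem \ref{D2} implies that $S$ is a $\pi$-regular ring.
\end{proof}
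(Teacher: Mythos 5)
Your proof is correct and is essentially identical to the paper's: the corollary is stated immediately after the observation that every quasi-projective module satisfies the $D_2$ condition, and the intended argument is exactly to combine that fact with Theorem \ref{D2}. Nothing further is needed.
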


\begin{thm}  The following are equivalent for a ring $R$.
\begin{enumerate}
    \item[{\rm (1)}] $M_n(R)$ is $\pi$-regular for every positive
integer $n$.
   \item[{\rm (2)}] Every finitely generated projective $R$-module is
   dual $\pi$-Rickart.
\end{enumerate}
\end{thm}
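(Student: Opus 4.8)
The plan is to prove $(1)\Leftrightarrow(2)$ by transferring between the matrix ring $M_n(R)$ and the free module $R^n$ via the standard identification $\mathrm{End}_R(R^n)\cong M_n(R)$, and between arbitrary finitely generated projective modules and corners of matrix rings. For the implication $(1)\Rightarrow(2)$, let $P$ be a finitely generated projective $R$-module. Then $P\oplus Q\cong R^n$ for some $n$ and some module $Q$, so $\mathrm{End}_R(P)\cong eM_n(R)e$ for an idempotent $e\in M_n(R)$. By hypothesis $M_n(R)$ is $\pi$-regular, hence by Lemma \ref{dorsey}(1) the corner $eM_n(R)e$ is $\pi$-regular as well. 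Applying Lemma \ref{pireg} to the module $P$ (whose endomorphism ring is this $\pi$-regular corner) yields that $P$ is dual $\pi$-Rickart. This direction is essentially a repackaging of Proposition \ref{fgproj} with the commutativity hypothesis dropped, which is harmless here because we are given $\pi$-regularity of every $M_n(R)$ directly rather than deducing it from $R$.

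For the implication $(2)\Rightarrow(1)$, I would fix a positive integer $n$ and show $M_n(R)$ is $\pi$-regular. By Lemma \ref{dorsey}(2) it suffices to show that $M_k\big(M_n(R)\big)\cong M_{kn}(R)$ is $\pi$-regular for all $k$, so it is enough to show every $M_m(R)$ is $\pi$-regular for every $m$; equivalently, by Proposition \ref{pi}, that the right module $M_m(R)$ over itself is dual $\pi$-Rickart, equivalently (via $\mathrm{End}_R(R^m)\cong M_m(R)$ and Proposition \ref{pi} applied inside the endomorphism ring language) that $R^m$ is a dual $\pi$-Rickart $R$-module. But $R^m$ is a finitely generated projective $R$-module, so (2) applies. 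More directly: by (2), $R^n$ is dual $\pi$-Rickart, so for every $f\in\mathrm{End}_R(R^n)=M_n(R)$ there are $e^2=e$ and a positive integer $t$ with $\mathrm{Im}\,f^t=eR^n$; arguing as in the proof of Proposition \ref{pi} (write $e=f^t x$, $f^t=ey$, multiply to get $f^t x f^t=f^t$) shows $M_n(R)$ is $\pi$-regular.

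The only subtlety — and the step I would be most careful about — is making sure the argument for $(2)\Rightarrow(1)$ really yields $\pi$-regularity of $M_n(R)$ for \emph{every} $n$ and not just for some convenient one. This is handled cleanly because (2) quantifies over \emph{all} finitely generated projective $R$-modules, and $R^n$ is one for each $n$; no appeal to Lemma \ref{dorsey}(2) is actually needed for the reverse direction. For the forward direction, the one thing to verify is that $\mathrm{End}_R(P)$ is genuinely of the form $eM_n(R)e$ (this is standard: picking a splitting $R^n\to P$ and its section identifies $P$ with $eR^n$ for an idempotent $e=e^2\in M_n(R)=\mathrm{End}_R(R^n)$, and then $\mathrm{End}_R(eR^n)=eM_n(R)e$), after which Lemma \ref{dorsey}(1) and Lemma \ref{pireg} finish the job. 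Thus both implications reduce to bookkeeping already carried out in Proposition \ref{pi}, Lemma \ref{pireg}, and Lemma \ref{dorsey}.
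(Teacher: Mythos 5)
Your proof is correct, and the forward direction coincides with the paper's: identify $\mathrm{End}_R(P)$ with a corner $eM_n(R)e$, note it is $\pi$-regular (Lemma \ref{dorsey}(1)), and invoke Lemma \ref{pireg}. Where you genuinely diverge is in $(2)\Rightarrow(1)$. The paper gets there by citing Corollary \ref{prgd}: $R^n$ is projective, hence quasi-projective, hence satisfies $D_2$, so Theorem \ref{D2} (which itself rests on the unproved Lemma \ref{maybe}) gives $\pi$-regularity of $\mathrm{End}_R(R^n)=M_n(R)$. You instead run the computation of Proposition \ref{pi} directly on $R^n$: from $\mathrm{Im}\,f^t=eR^n$ extract matrices $x,y$ with $e=f^tx$ and $f^t=ey$ (column by column, against the standard basis of $R^n$), and conclude $f^txf^t=ef^t=ey=f^t$. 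This is a sound and more self-contained route --- it bypasses the $D_2$/quasi-projectivity machinery and the unproved Lemma \ref{maybe} entirely --- at the cost of redoing a small matrix bookkeeping argument rather than quoting an existing corollary. Your aside about reducing to $M_k(M_n(R))\cong M_{kn}(R)$ via Lemma \ref{dorsey}(2) is, as you note yourself, an unnecessary detour, since $(2)$ already quantifies over all $R^m$; the final direct argument is what carries the proof.
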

\begin{proof} (1) $\Rightarrow$ (2) Let $M$ be a finitely generated
projective $R$-module. Then $M\cong eR^n$ for some positive
integer $n$ and $e^2=e\in M_n(R)$. Hence $S$ is isomorphic to
$eM_n(R)e$. By (1), $S$ is $\pi$-regular. Thus $M$ is
$\pi$-Rickart due to Lemma \ref{pireg}.

(2) $\Rightarrow$ (1) $M_n(R)$ can be viewed as the endomorphism
ring of a projective $R$-module $R^n$ for any positive integer
$n$. By (2), $R^n$ is dual $\pi$-Rickart. Then $M_n(R)$ is
$\pi$-regular by Corollary \ref{prgd}.
\end{proof}

Recall that an $R$-module $M$ is called {\it duo} if every
submodule of $M$ is fully invariant, i.e., for any submodule $N$
of $M$, $f N\leq N$ for each  $f\in S$, equivalently, every right
$R$-submodule of $M$ is also left $S$-submodule. Our next aim is
to determine to find conditions under which any factor module of a
dual $\pi$-Rickart module is also dual $\pi$-Rickart.

\begin{cor} Let $M$ be a quasi-projective module and $N$ a fully invariant submodule of $M$. If $M$ is
dual $\pi$-Rickart, then so is $M/N$.
\end{cor}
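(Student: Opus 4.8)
The plan is to exploit that, for a quasi-projective module $M$ and a fully invariant submodule $N$, every endomorphism of $M/N$ lifts to $S$. Write $\pi\colon M\to M/N$ for the natural epimorphism. First I would verify the lifting: given $\bar g\in\mathrm{End}_R(M/N)$, the homomorphism $\bar g\pi\colon M\to M/N$ factors through the epimorphism $\pi$ because $M$ is $M$-projective, so there is $g\in S$ with $\pi g=\bar g\pi$; and since $N$ is fully invariant, $g(N)\subseteq N$, so $g$ really is a lift of $\bar g$. An immediate induction then gives $\pi g^{\,n}=\bar g^{\,n}\pi$ for every positive integer $n$.

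Now fix $\bar g\in\mathrm{End}_R(M/N)$ and a lift $g\in S$. Since $M$ is dual $\pi$-Rickart, there are an idempotent $e\in S$ and a positive integer $n$ with $\mathrm{Im}\,g^{\,n}=eM$. Using $\pi g^{\,n}=\bar g^{\,n}\pi$ together with surjectivity of $\pi$,
\[
\mathrm{Im}\,\bar g^{\,n}=\bar g^{\,n}\pi(M)=\pi\bigl(g^{\,n}M\bigr)=\pi(eM)=(eM+N)/N .
\]
Because $N$ is fully invariant, $eN\subseteq N$, so $e$ induces $\bar e\in\mathrm{End}_R(M/N)$ via $\bar e(m+N)=em+N$; clearly $\bar e^{\,2}=\bar e$ and $\bar e(M/N)=\pi(eM)$. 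Hence $\mathrm{Im}\,\bar g^{\,n}=\bar e(M/N)$ is the image of an idempotent of $\mathrm{End}_R(M/N)$, hence a direct summand of $M/N$. As $\bar g\in\mathrm{End}_R(M/N)$ was arbitrary, $M/N$ is dual $\pi$-Rickart.

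I expect the only delicate points to be the two appeals to the hypotheses: quasi-projectivity is what produces the lift $g$ (and with it $g^{\,n}$ lifting $\bar g^{\,n}$), while full invariance of $N$ is what lets the idempotent $e\in S$ descend to an idempotent $\bar e$ of $\mathrm{End}_R(M/N)$ with $\bar e(M/N)=\pi(eM)$; without either hypothesis there is no usable bridge between $S$ and $\mathrm{End}_R(M/N)$. Everything else — the induction $\pi g^{\,n}=\bar g^{\,n}\pi$, the equality $\pi(eM)=(eM+N)/N$, and $\bar e^{\,2}=\bar e$ — is routine.
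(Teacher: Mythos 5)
Your proof is correct, and it takes a genuinely different route from the paper's. The paper does not lift endomorphisms individually: it observes that quasi-projectivity plus full invariance of $N$ yield a surjective ring homomorphism $\varphi\colon S\to\mathrm{End}_R(M/N)$, invokes its Corollary 3.15 (quasi-projective dual $\pi$-Rickart $\Rightarrow$ $S$ is $\pi$-regular, which rests on the $D_2$ condition and the characterization in Theorem 3.13), notes that $\mathrm{End}_R(M/N)\cong S/\mathrm{Ker}\,\varphi$ inherits $\pi$-regularity as a quotient ring, and concludes by Lemma 3.9 ($\pi$-regular endomorphism ring $\Rightarrow$ dual $\pi$-Rickart). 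Your argument bypasses all of that machinery: you lift a single $\bar g$ through $\pi$ using $M$-projectivity, apply the dual $\pi$-Rickart hypothesis to the lift to get $\mathrm{Im}\,g^{\,n}=eM$, and use full invariance only to descend $e$ to an idempotent $\bar e$ with $\bar e(M/N)=\pi(eM)=\mathrm{Im}\,\bar g^{\,n}$. What the paper's route buys is brevity given the results already established in Section 3 and a structural statement about the endomorphism ring itself; what yours buys is self-containedness (only the definition is used), a transparent accounting of exactly where each hypothesis enters, and the extra information that the same exponent $n$ that works for the lift works for $\bar g$. One small remark: your aside that full invariance guarantees $g(N)\subseteq N$ for the lift is redundant, since $\pi g=\bar g\pi$ already forces $g(N)\subseteq\mathrm{Ker}\,\pi=N$; full invariance is genuinely needed only for the idempotent $e$, which is not constructed as a lift.
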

\begin{proof} Let $f\in S$ and $\pi$ denote the natural
epimorphism from $M$ to $M/N$. Consider the following diagram.
$$\xymatrix{ M \ar[d]_f \ar[r]^\pi & M/N  \ar@{.>}[d]^{f^*} \\
M \ar[r]_\pi & M/N\\
 }$$
 Since $N$ is fully invariant, we have Ker$\pi\subseteq$ Ker$\pi
 f$.  By the Factor Theorem, there exists a unique homomorphism
 $f^*$ such that $f^* \pi=\pi f$. Hence we define a homomorphism
 $\varphi : S\rightarrow$ End$_R(M/N)$ with $\varphi(f)=f^*$ for
 any $f\in S$. As $M$ is quasi-projective, $\varphi$ is an
 epimorphism. Thus End$_R(M/N)\cong S/$Ker$\varphi$. By Corollary
 \ref{prgd},  $S$ is  $\pi$-regular, and so is
 $S/$Ker$\varphi$. Therefore $M/N$ is dual $\pi$-Rickart due to Lemma
 \ref{pireg}.
\end{proof}

\begin{cor} Let $M$ be a quasi-projective duo module. If $M$ is  dual $\pi$-Rickart,
then $M/N$ is also dual $\pi$-Rickart for every submodule $N$ of
$M$.
\end{cor}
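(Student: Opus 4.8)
The plan is to reduce the statement immediately to the preceding corollary. Recall that $M$ is \emph{duo} precisely when every submodule of $M$ is fully invariant, i.e.\ $fN\leq N$ for all $f\in S$ and all submodules $N$. So, given an arbitrary submodule $N$ of $M$, the hypotheses of the previous corollary are automatically satisfied: $M$ is quasi-projective, $N$ is fully invariant, and $M$ is dual $\pi$-Rickart. Applying that corollary yields that $M/N$ is dual $\pi$-Rickart, and since $N$ was arbitrary the claim follows.

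There is essentially no obstacle here; the entire content is the observation that the duo condition supplies full invariance of every submodule for free, after which the work already carried out in the previous corollary does all the heavy lifting. If one prefers a self-contained argument, one simply inlines that proof: for each $f\in S$ the inclusion $\mathrm{Ker}\,\pi=N\subseteq\mathrm{Ker}\,\pi f$ (valid because $fN\leq N$) produces, via the Factor Theorem, an induced endomorphism $f^{*}$ of $M/N$ with $f^{*}\pi=\pi f$; quasi-projectivity of $M$ makes $f\mapsto f^{*}$ a surjective ring homomorphism $\varphi\colon S\to\mathrm{End}_R(M/N)$, so $\mathrm{End}_R(M/N)\cong S/\mathrm{Ker}\,\varphi$; by Corollary \ref{prgd} the ring $S$ is $\pi$-regular, hence so is the quotient $S/\mathrm{Ker}\,\varphi$; and finally Lemma \ref{pireg} gives that $M/N$ is dual $\pi$-Rickart. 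Either way the proof is short, the only point worth stating explicitly being the passage from ``duo'' to ``every submodule fully invariant''.
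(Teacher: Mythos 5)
Your proof is correct and matches the paper's intent exactly: the corollary is stated without proof precisely because the duo hypothesis makes every submodule fully invariant, so the preceding corollary applies verbatim. Nothing further is needed.
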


\begin{cor} If $M$ be a quasi-projective dual $\pi$-Rickart
module, then so is $M/$Rad$(M)$ and $M/$Soc$(M)$.
\end{cor}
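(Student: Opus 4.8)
The plan is to reduce this statement directly to the corollary immediately preceding it, namely that for a quasi-projective module $M$ and a fully invariant submodule $N$, if $M$ is dual $\pi$-Rickart then so is $M/N$. Since $M$ is assumed quasi-projective and dual $\pi$-Rickart, all that remains is to observe that both $\mathrm{Rad}(M)$ and $\mathrm{Soc}(M)$ are fully invariant submodules of $M$; once this is in hand, applying the previous corollary twice (once with $N = \mathrm{Rad}(M)$ and once with $N = \mathrm{Soc}(M)$) finishes the argument.

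For the full invariance of $\mathrm{Soc}(M)$, I would use that $\mathrm{Soc}(M)$ is the sum of all simple submodules of $M$: for any $f \in S$ and any simple submodule $T \leq M$, the image $f(T)$ is either $0$ or a simple submodule of $M$, so $f(T) \leq \mathrm{Soc}(M)$; summing over all such $T$ gives $f(\mathrm{Soc}(M)) \leq \mathrm{Soc}(M)$. For the full invariance of $\mathrm{Rad}(M)$, I would use the description of $\mathrm{Rad}(M)$ as the sum of all superfluous (small) submodules of $M$ (equivalently, the intersection of all maximal submodules): for any $f \in S$ and any small submodule $L \leq M$, the image $f(L)$ is again small in $M$, hence $f(L) \leq \mathrm{Rad}(M)$, and summing gives $f(\mathrm{Rad}(M)) \leq \mathrm{Rad}(M)$.

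With both submodules shown to be fully invariant, the previous corollary applies verbatim: $M/\mathrm{Rad}(M)$ and $M/\mathrm{Soc}(M)$ are dual $\pi$-Rickart. I do not anticipate any genuine obstacle here; the only point requiring care is the verification of full invariance, and that is entirely standard module theory (and could even be cited rather than proved). The statement is thus essentially an immediate specialization of the preceding corollary.
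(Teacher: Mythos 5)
Your proposal is correct and matches the paper's intent exactly: the paper states this corollary without proof immediately after the result on factor modules by fully invariant submodules, and the intended argument is precisely that $\mathrm{Rad}(M)$ and $\mathrm{Soc}(M)$ are fully invariant (your verifications of this standard fact are sound). Nothing further is needed.
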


\begin{prop}\label{image} Let $M$ be a dual $\pi$-Rickart module. Then every
endomorphism of $M$ with a small image in $M$ is nilpotent.
\end{prop}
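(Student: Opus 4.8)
The plan is to exploit the interaction between smallness and the direct-summand structure coming from the dual $\pi$-Rickart hypothesis. Let $f\in S$ have small image, i.e.\ $\mathrm{Im}f \ll M$. By the definition of a dual $\pi$-Rickart module there exist an idempotent $e^2=e\in S$ and a positive integer $n$ with $\mathrm{Im}f^{n}=eM$. The point is that $\mathrm{Im}f^{n}$ sits inside $\mathrm{Im}f$, and I would first record the elementary fact that a submodule of a small submodule is again small in $M$ (if $\mathrm{Im}f^{n}+L=M$ then a fortiori $\mathrm{Im}f+L=M$, whence $L=M$). Thus $eM=\mathrm{Im}f^{n}$ is small in $M$.

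Next I would use that $eM$ is a direct summand: $M=eM\oplus(1-e)M$. In particular $eM+(1-e)M=M$, and smallness of $eM$ forces $(1-e)M=M$, hence $eM=0$, i.e.\ $e=0$. Therefore $\mathrm{Im}f^{n}=eM=0$, which gives $f^{n}=0$, so $f$ is nilpotent. This closes the argument.

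There is no real obstacle here; the only things to be careful about are the two small lemmas invoked along the way (a submodule of a small submodule is small, and a small direct summand is zero), both of which are standard and take one line each, so I would just state them inline rather than isolating them as separate results.
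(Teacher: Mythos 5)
Your argument is correct and is essentially the paper's own proof: both note that $\mathrm{Im}f^{n}$ is a direct summand by the dual $\pi$-Rickart hypothesis, that it is small because it is contained in the small submodule $\mathrm{Im}f$, and that a small direct summand must be zero, giving $f^{n}=0$. You merely spell out the two one-line facts that the paper leaves implicit.
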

\begin{proof} Let $f\in S$ with Im$f$ small in $M$. Then Im$f^n$
is a direct summand of $M$ for some positive integer $n$. Also
Im$f^n$ is small in $M$. Hence $f^n=0$.
\end{proof}

\begin{cor} Let $M$ be a dual $\pi$-Rickart discrete module. Then
$J(S)$ is nil and $S/J(S)$ is von Neumann regular.
\end{cor}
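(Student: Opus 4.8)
The plan is to assemble the statement from Proposition \ref{image}, Theorem \ref{D2}, and the standard structure theory of endomorphism rings of discrete modules, with essentially no computation. Recall that a discrete module is, by definition, a module satisfying the lifting condition $D_1$ together with the condition $D_2$ of this paper; moreover, for a discrete module $M$ the Jacobson radical of $S$ has the intrinsic description $J(S)=\{f\in S : \text{Im}f \text{ is small in } M\}$, and $S/J(S)$ is von Neumann regular (the Mohamed--M\"uller theorem on endomorphism rings of discrete modules). I would quote these as known facts about discrete modules.

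For the nilpotency of $J(S)$ the quickest route is through Proposition \ref{image}: since every $f\in J(S)$ has small image in $M$, Proposition \ref{image} gives that $f$ is nilpotent, so $J(S)$ is nil. (This is presumably why the corollary is placed immediately after Proposition \ref{image}.) One can also see it without the radical formula: as $M$ is discrete it satisfies the $D_2$ condition, so Theorem \ref{D2} shows $S$ is $\pi$-regular, and in any $\pi$-regular ring an idempotent inside the Jacobson radical is zero --- if $a\in J(S)$ and $a^n=a^nxa^n$, then $e:=a^nx\in J(S)$ is idempotent, hence $e=0$ and $a^n=ea^n=0$ --- so again $J(S)$ is nil. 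The von Neumann regularity of $S/J(S)$ is then read off directly from the quoted structure theorem for discrete modules.

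The only real obstacle is that the two structural inputs --- the description of $J(S)$ and the regularity of $S/J(S)$ --- are external facts about discrete modules, not consequences of the dual $\pi$-Rickart property alone; the dual $\pi$-Rickart hypothesis itself only supplies Proposition \ref{image} (small-image endomorphisms are nilpotent) and, via Theorem \ref{D2}, the $\pi$-regularity of $S$. Thus the proof is a brief synthesis of these ingredients, with no genuine calculation to carry out.
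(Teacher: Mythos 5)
Your proposal is correct and follows essentially the same route as the paper: invoke the Mohamed--M\"uller theorem for discrete modules to identify $J(S)$ with the endomorphisms of small image and to get the regularity of $S/J(S)$, then apply Proposition \ref{image} to conclude $J(S)$ is nil. Your alternative argument for nilness via the $D_2$ condition, Theorem \ref{D2}, and the fact that idempotents in the Jacobson radical of a $\pi$-regular ring vanish is a valid bonus, but the main line of reasoning coincides with the paper's.
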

\begin{proof} Since $M$ is discrete, by \cite[Theorem 5.4]{MM}, $J(S)$ consists of
endomorphisms with small image. By Proposition \ref{image}, $J(S)$
is nil and again by \cite[Theorem 5.4]{MM}, $S/J(S)$ is von
Neumann regular.
\end{proof}

\begin{thm}\label{gds}  The following are equivalent for a module $M$.
\begin{enumerate}
    \item [{\rm (1)}] $M$ is a dual $\pi$-Rickart module.
    \item [{\rm (2)}] $S$ is a  generalized left principally projective ring  and
$f^{n}M=r_{M}(l_{S}(f^{n}M))$ for all $f \in S$ and a positive
integer $n$.
\end{enumerate}
\end{thm}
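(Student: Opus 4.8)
The plan is to translate the defining image condition $\mathrm{Im}\,f^{n}=eM$ into a pair of annihilator conditions, using throughout three elementary identities valid for any idempotent $e\in S$ and any $g\in S$: first, $l_{S}(f^{n})=l_{S}(f^{n}M)$, because $gf^{n}=0$ in $S$ says exactly that $g(f^{n}M)=0$; second, $r_{M}(Se)=r_{M}(e)=(1-e)M$; and third, $l_{S}\big((1-e)M\big)=Se$. Once these are recorded, both implications are short.

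For $(1)\Rightarrow(2)$, Lemma \ref{grpp} already gives that $S$ is a generalized left principally projective ring, so it remains only to verify $f^{n}M=r_{M}(l_{S}(f^{n}M))$. Given $f\in S$, the dual $\pi$-Rickart hypothesis supplies $e^{2}=e\in S$ and $n\geq 1$ with $\mathrm{Im}\,f^{n}=eM$; then $l_{S}(f^{n}M)=l_{S}(eM)=S(1-e)$, and applying $r_{M}(-)$ yields $r_{M}(l_{S}(f^{n}M))=r_{M}(S(1-e))=r_{M}(1-e)=eM=f^{n}M$. This is the very same exponent $n$ produced by the dual $\pi$-Rickart condition, so for each $f$ a single $n$ witnesses both clauses of (2).

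For $(2)\Rightarrow(1)$, fix $f\in S$ and take a positive integer $n$ for which $l_{S}(f^{n})$ is generated by an idempotent $e$ and $f^{n}M=r_{M}(l_{S}(f^{n}M))$. Using $l_{S}(f^{n})=l_{S}(f^{n}M)=Se$ together with the assumed equality, we obtain $f^{n}M=r_{M}(Se)=r_{M}(e)=(1-e)M$, which is a direct summand of $M$. Hence $\mathrm{Im}\,f^{n}$ is a direct summand and $M$ is dual $\pi$-Rickart.

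The argument is therefore essentially bookkeeping with annihilators, and I expect the only point needing a little care to be the passage between $l_{S}(f^{n})$ and $l_{S}(f^{n}M)$ (and similarly between $r_{M}(Se)$ and $r_{M}(e)$): one must check that annihilating the submodule $f^{n}M$ in $S$ coincides with annihilating the endomorphism $f^{n}$, and that a left ideal $Se$ generated by an idempotent has the same right annihilator in $M$ as $e$ itself. Once this is settled there is no genuine obstacle, and in particular no conflict between the two occurrences of ``a positive integer $n$'' in statement (2), since the forward implication exhibits, and the backward implication uses, a single $n$ serving both roles.
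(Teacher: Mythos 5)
Your proposal is correct and follows essentially the same route as the paper: the forward direction invokes Lemma \ref{grpp} and computes $r_{M}(l_{S}(f^{n}M))=r_{M}(l_{S}(eM))=eM=f^{n}M$, and the backward direction uses $l_{S}(f^{n}M)=Se$ together with the double-annihilator hypothesis to get $f^{n}M=r_{M}(Se)=(1-e)M$, a direct summand. The only difference is that you spell out the elementary identities $l_{S}(f^{n})=l_{S}(f^{n}M)$ and $r_{M}(Se)=(1-e)M$, which the paper uses tacitly.
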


\begin{proof} (1) $\Rightarrow$ (2)  By Lemma \ref{grpp},  we only need to show
that   $f^{n}M=r_{M}(l_{S}(f^{n}M))$ for all $f \in S$. Since $M$
is dual $\pi$-Rickart, for any $f \in S$, $f^{n}M=eM$ for some
$e^{2}=e\in S$ and a positive integer $n$. Thus
$r_{M}(l_{S}(f^{n}M))=r_{M}(l_{S}(eM))=eM=f^{n}M$.\\
(2) $\Rightarrow$ (1) Let $f\in S$. Since $S$ is a generalized
left principally projective ring, $l_{S}(f^{n}M)=Se$ for some
$e^{2}=e\in S$ and a positive integer $n$. By hypothesis,
$f^{n}M=r_{M}(l_{S}(f^{n}M))=r_{M}(Se)=(1-e)M$. Thus $M$ is dual
$\pi$-Rickart.
\end{proof}

\begin{cor} Let $M$ be a module. Then
 $M$ is  dual $\pi$-Rickart if and only if
$f^{n}M=r_{M}(l_{S}(f^{n}M))$ and $r_{M}(l_{S}(f^{n}M))$ is a
direct summand of $M$.
\end{cor}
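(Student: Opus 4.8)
The plan is to deduce this from Theorem \ref{gds}, since the two displayed conditions are exactly a repackaging of condition (2) there once one observes that ``generalized left principally projective'' is equivalent to the relevant annihilator being a direct summand.

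For the necessity, suppose $M$ is dual $\pi$-Rickart. Fix $f \in S$. By the implication (1)$\Rightarrow$(2) of Theorem \ref{gds} we already have $f^{n}M = r_{M}(l_{S}(f^{n}M))$ for a suitable positive integer $n$. It remains to see that this submodule is a direct summand, but that is immediate: by the definition of a dual $\pi$-Rickart module, $f^{n}M = \mathrm{Im}f^{n} = eM$ for some $e^{2}=e\in S$, and hence $r_{M}(l_{S}(f^{n}M)) = f^{n}M = eM$ is a direct summand of $M$. (Alternatively, one can bypass Theorem \ref{gds} entirely here: from $f^{n}M = eM$ one computes $l_{S}(eM) = S(1-e)$ and then $r_{M}(S(1-e)) = eM$, so $r_{M}(l_{S}(f^{n}M)) = eM = f^{n}M$.)

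For the sufficiency, assume that for every $f\in S$ there is a positive integer $n$ with $f^{n}M = r_{M}(l_{S}(f^{n}M))$ and with $r_{M}(l_{S}(f^{n}M))$ a direct summand of $M$. Then $\mathrm{Im}f^{n} = f^{n}M$ is a direct summand of $M$, so $M = f^{n}M \oplus Q$ for some submodule $Q$, and the projection $e\colon M \to M$ onto $f^{n}M$ along $Q$ is an idempotent of $S$ with $\mathrm{Im}f^{n} = eM$. Since $f$ was arbitrary, $M$ is dual $\pi$-Rickart.

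There is essentially no obstacle here; the only point requiring a word of care is the observation, used in the sufficiency argument, that a direct summand of $M$ is precisely the image of an idempotent endomorphism, and, in the necessity argument, the routine identities $l_{S}(eM) = S(1-e)$ and $r_{M}(S(1-e)) = eM$ for $e^{2}=e\in S$.
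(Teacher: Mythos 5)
Your proof is correct and follows the route the paper intends: the corollary is stated without proof as an immediate consequence of Theorem \ref{gds}, and your argument simply makes explicit the routine identities $l_{S}(eM)=S(1-e)$ and $r_{M}(S(1-e))=eM$ that link the two formulations. Nothing further is needed.
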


\begin{thm} Let $M$ be a dual $\pi$-Rickart module.
Then the left singular ideal $Z_l(S)$ of $S$ is nil
and $Z_l(S)\subseteq J(S)$.
\end{thm}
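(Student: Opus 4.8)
The plan is to show first that $Z_l(S)$ is nil, and then deduce the inclusion $Z_l(S)\subseteq J(S)$ from nilpotency together with the general fact that a nil left ideal lies in the Jacobson radical. So the crux is to prove that every $f\in Z_l(S)$ is nilpotent. Recall $f\in Z_l(S)$ means $l_S(f)$ is an essential left ideal of $S$, i.e. $l_S(f)\cap Sg\neq 0$ for every nonzero $g\in S$.

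First I would fix $f\in Z_l(S)$ and apply the dual $\pi$-Rickart hypothesis: there is a positive integer $n$ and an idempotent $e\in S$ with $\mathrm{Im}\,f^n=eM$. Since $l_S(f)$ is essential and $l_S(f)\subseteq l_S(f^n)$, the left ideal $l_S(f^n)$ is essential as well; but by Lemma \ref{grpp} (applied inside its proof) we have $l_S(f^n)=S(1-e)$. An essential left ideal of the form $S(1-e)$ for an idempotent $1-e$ forces $1-e=1$: indeed if $e\neq 0$ then $Se$ is a nonzero left ideal with $S(1-e)\cap Se=0$, contradicting essentiality. Hence $e=0$, so $\mathrm{Im}\,f^n=eM=0$, that is $f^n=0$. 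This shows every element of $Z_l(S)$ is nilpotent, so $Z_l(S)$ is a nil left ideal.

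Finally, since $Z_l(S)$ is a (two-sided) ideal of $S$ consisting of nilpotent elements, it is a nil ideal, and every nil ideal is contained in $J(S)$; hence $Z_l(S)\subseteq J(S)$. I expect the only mild subtlety to be the argument that an essential left ideal $S(1-e)$ must equal $S$ (equivalently $e=0$): one must note that $Se$ and $S(1-e)$ intersect trivially since $s(1-e)=te$ implies $s(1-e)=s(1-e)(1-e)=te(1-e)=0$. Everything else is a direct invocation of Lemma \ref{grpp} and the standard fact about nil ideals, so there is no real obstacle beyond this short annihilator computation.
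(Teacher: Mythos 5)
Your proposal is correct and follows essentially the same route as the paper: apply the dual $\pi$-Rickart condition to get $\mathrm{Im}\,f^n=eM$, use the computation from Lemma \ref{grpp} to identify $l_S(f^n)=S(1-e)$, and conclude from essentiality that $e=0$ and $f^n=0$. The only cosmetic difference is at the end: you invoke the standard fact that a nil (left) ideal lies in $J(S)$, whereas the paper inlines the proof of that fact by noting $(gf)^n=0$ makes $1-gf$ invertible for every $g\in S$; your explicit verification that $Se\cap S(1-e)=0$ is a welcome elaboration of a step the paper leaves implicit.
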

\begin{proof} Let $f\in Z_l(S)$. Since $M$ is dual $\pi$-Rickart,
Im$(f^n)=eM$ for some positive integer $n$ and $e=e^2\in S$. Then,
by Lemma \ref{grpp}, $l_S(f^n)=S(1-e)$. Since $l_S(f^n)$ is
essential in $S$ as a left ideal, we have $l_S(f^n)=S$. This
implies that $f^n=0$ and so $Z_l(S)$ is nil. On the other hand,
for any $g\in S$ and $f\in Z_l(S)$,  according to previous
discussion, $(gf)^n=0$ for some positive integer $n$. Hence $1-gf$
is invertible. Thus $f\in J(S)$. Therefore $Z_l(S)\subseteq J(S)$.
\end{proof}

\begin{prop}\label{local} The following are equivalent for a module $M$.
\begin{enumerate}
\item[{\rm (1)}] $M$ is an indecomposable  dual $\pi$-Rickart
module.
\item[{\rm (2)}] Each element of $S$ is either an epimorphism or nilpotent.
\end{enumerate}
\end{prop}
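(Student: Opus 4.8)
The plan is to prove the two implications directly, the guiding observation being that an indecomposable module has no idempotents in $S$ other than $0$ and $1$, which forces the idempotent appearing in the dual $\pi$-Rickart condition to be trivial.

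For $(1)\Rightarrow(2)$, I would fix $f\in S$. Since $M$ is dual $\pi$-Rickart, there are $e^{2}=e\in S$ and a positive integer $n$ with $\operatorname{Im}f^{n}=eM$. Indecomposability of $M$ gives $e=0$ or $e=1$. If $e=1$, then $f^{n}M=M$; since $\operatorname{Im}f^{n}\subseteq\operatorname{Im}f$, it follows that $fM=M$, so $f$ is an epimorphism. If $e=0$, then $f^{n}M=0$, that is $f^{n}=0$, so $f$ is nilpotent.

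For $(2)\Rightarrow(1)$, I would first verify that $M$ is indecomposable: if $M=A\oplus B$ with $A\neq 0\neq B$, then the projection of $M$ onto $A$ with kernel $B$ is an idempotent endomorphism which is neither an epimorphism (its image $A$ is a proper submodule of $M$) nor nilpotent (being a nonzero idempotent), contradicting $(2)$. Then, to see that $M$ is dual $\pi$-Rickart, take any $f\in S$. By $(2)$, either $f$ is an epimorphism, in which case $\operatorname{Im}f=M=1\cdot M$ and we are done with $n=1$, $e=1$; or $f$ is nilpotent, say $f^{n}=0$, in which case $\operatorname{Im}f^{n}=0=0\cdot M$. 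In both cases $\operatorname{Im}f^{n}$ is a direct summand of $M$, so $M$ is dual $\pi$-Rickart.

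There is essentially no hard step here; the only points requiring a moment of care are the routine passage from ``$f^{n}$ surjective'' to ``$f$ surjective'' and the standard convention that an indecomposable module is nonzero, so that the degenerate case $M=0$ (where $S=0$ and both conditions hold vacuously) may be set aside.
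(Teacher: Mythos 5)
Your proposal is correct and follows essentially the same route as the paper: use indecomposability to force the idempotent in $\operatorname{Im}f^{n}=eM$ to be $0$ or $1$ for one direction, and for the converse note that condition (2) rules out nontrivial idempotents and that either alternative makes $\operatorname{Im}f^{n}$ a trivial (hence direct summand) submodule. Your version merely spells out the small steps (passing from $f^{n}$ surjective to $f$ surjective, and arguing via projections that $M$ is indecomposable) that the paper leaves implicit.
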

\begin{proof} (1) $\Rightarrow$ (2)  Let $f\in S$. Then $f^nM$ is a direct
summand of $M$ for some positive integer $n$. As $M$ is
indecomposable, we see that $f^nM=0$ or $f^nM=M$. This implies
that $f$ is an epimorphism or nilpotent.

(2) $\Rightarrow$ (1) Let $e=e^2\in S$. If $e$ is nilpotent, then
$e=0$. If $e$ is an epimorphism, then $e=1$. Hence $M$ is
indecomposable. Also for any $f\in S$, $fM=M$ or $f^nM=0$ for some
positive integer $n$. Therefore $M$ is dual $\pi$-Rickart.
\end{proof}

\begin{thm}\label{pimorphic} Consider the following conditions for a module $M$.
\begin{enumerate}
    \item[{\rm (1)}] $S$ is a local ring with nil Jacobson radical.
   \item[{\rm (2)}] $M$ is an indecomposable dual $\pi$-Rickart
   module.
\end{enumerate}
Then {\rm (1)} $\Rightarrow$ {\rm (2)}. If $M$ is a morphic
module, then {\rm (2)} $\Rightarrow$ {\rm (1)}.
\end{thm}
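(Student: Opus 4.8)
The plan is to prove the two implications separately. For $(1)\Rightarrow(2)$, suppose $S$ is local with nil Jacobson radical. Since $S$ is local, the only idempotents of $S$ are $0$ and $1$, so $M$ is indecomposable; it remains to show $M$ is dual $\pi$-Rickart. By Proposition \ref{local}, it suffices to check that every $f\in S$ is either an epimorphism or nilpotent. Given $f\in S$, since $S$ is local we have either $f$ a unit (hence an automorphism, in particular an epimorphism) or $f\in J(S)$. In the latter case $J(S)$ is nil, so $f^n=0$ for some positive integer $n$, i.e.\ $f$ is nilpotent. Thus $(2)$ holds by Proposition \ref{local}.

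For $(2)\Rightarrow(1)$ under the additional hypothesis that $M$ is morphic, assume $M$ is indecomposable and dual $\pi$-Rickart. First I would use Proposition \ref{local} again: every $f\in S$ is either an epimorphism or nilpotent. The morphic hypothesis is exactly what upgrades ``epimorphism'' to ``automorphism'': if $f$ is an epimorphism, then $M/fM=0$, so $\mathrm{Ker}\,f\cong M/fM=0$ by morphicity, whence $f$ is also a monomorphism and hence an automorphism, i.e.\ a unit in $S$. Therefore every element of $S$ is either a unit or nilpotent. From this one deduces $S$ is local in the standard way: the nilpotent elements lie in $J(S)$ (if $f$ is nilpotent then so is $gf$ for any $g\in S$, by the same argument used in the proof of Proposition \ref{local} — actually one needs $gf$ nilpotent, which follows since $gf$ cannot be an epimorphism when $f$ is not, hence $gf$ is nilpotent, so $1-gf$ is a unit), so the non-units of $S$ form the ideal of nilpotent elements, making $S$ local. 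Finally, $J(S)$ coincides with this set of nilpotent elements, so $J(S)$ is nil.

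The main obstacle is the step showing that the set of nilpotent elements of $S$ is closed under multiplication by arbitrary elements of $S$ (so that it is a two-sided ideal and equals $J(S)$). Here is the argument: let $f\in S$ be nilpotent and $g\in S$ arbitrary. If $gf$ were an epimorphism, then $f$ would be an epimorphism, hence (by morphicity, as above) an automorphism, contradicting nilpotence; so $gf$ is nilpotent. Symmetrically $fg$ is nilpotent. Since $S/\text{(nilpotents)}$ then has every element a unit and the nilpotents form an ideal, $S$ is local with $J(S)$ equal to the nil ideal of nilpotent elements. This completes the proof.
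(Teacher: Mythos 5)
Your proof of (1) $\Rightarrow$ (2) is exactly the paper's: in a local ring the only idempotents are $0$ and $1$, units are epimorphisms, elements of the nil Jacobson radical are nilpotent, and Proposition \ref{local} finishes. For (2) $\Rightarrow$ (1) you also reach the same pivotal point as the paper --- every element of $S$ is either an automorphism or nilpotent, morphicity being what upgrades ``epimorphism'' to ``automorphism'' --- but your finish contains a false step. You claim that ``if $gf$ were an epimorphism, then $f$ would be an epimorphism.'' That implication goes the wrong way: ${\rm Im}(gf)=g({\rm Im}f)$, so $gf$ epic forces $g$ to be epic, not $f$; a proper submodule can well map onto $M$ under $g$. (The version you call ``symmetric'' is the correct one: $fg$ epic does force $f$ epic, since ${\rm Im}(fg)\subseteq {\rm Im}f$.) The step is repairable in two ways: either run your Jacobson-radical test with $1-fg$ instead of $1-gf$, which characterizes $J(S)$ equally well and uses only the correct implication; or observe that if $gf$ were epic it would be an automorphism by morphicity, making $f$ a split monomorphism, and a nonzero module admits no injective nilpotent endomorphism. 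Note also that your parenthetical ``the nilpotents form an ideal'' is never verified for sums; you get away with this only because you actually route the containment through $J(S)$, which is an ideal automatically --- that should be said explicitly rather than left as an afterthought.

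The paper's own finish is shorter and avoids all of this: since every $f\in S$ is either invertible or nilpotent, and $1-f$ is invertible whenever $f$ is nilpotent, for every $f$ either $f$ or $1-f$ is a unit, which is the standard criterion for $S$ to be local; then $J(S)$ consists of non-units, hence of nilpotents, so $J(S)$ is nil. This bypasses any need to show that products (or sums) of nilpotent endomorphisms are nilpotent. I recommend either adopting that argument or patching your $gf$ step as indicated.
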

\begin{proof} (1) $\Rightarrow$ (2) Clearly, each element of $S$ is either an epimorphism
or nilpotent. Then $M$ is  indecomposable dual $\pi$-Rickart due
to Proposition \ref{local}.

(2) $\Rightarrow$ (1)  Let $f\in S$. Then $f^nM=eM$ for some
positive integer $n$ and an idempotent $e$ in $S$. If $e=1$, then
$f$ is an epimorphism. Since $M$ is morphic, $f$ is  invertible by
\cite[Corollary 2]{NC}. If $e=0$, then $f^n=0$. Hence $1-f$ is
invertible. This implies that $S$ is a local ring. Now let $0\neq
f\in J(S)$. Since $f$ is not invertible and $M$ is morphic, $f$ is
nilpotent by Proposition \ref{local}. Therefore $J(S)$ is nil.
\end{proof}

 The next result can be obtained from Theorem \ref{pimorphic} and
\cite[Lemma 2.11]{HC}.
\begin{cor} Let $M$ be an indecomposable dual $\pi$-Rickart module. If
$M$ is morphic, then $S$ is a left and right $\pi$-morphic ring.
\end{cor}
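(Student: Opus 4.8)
The plan is to reduce the statement to Theorem \ref{pimorphic} together with a standard fact about local rings whose Jacobson radical is nil. First I would observe that $M$ is indecomposable, dual $\pi$-Rickart, and morphic, so the implication {\rm (2)} $\Rightarrow$ {\rm (1)} of Theorem \ref{pimorphic} applies and yields that $S$ is a local ring with $J(S)$ nil. Recall here that the morphic hypothesis is genuinely what upgrades the conclusion of Proposition \ref{local} (every element of $S$ is an epimorphism or nilpotent) to the statement that $S$ is local: via \cite[Corollary 2]{NC}, every surjective endomorphism of a morphic module is an automorphism.

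Next I would recall that $S$ is \emph{left $\pi$-morphic} if for every $a\in S$ there is a positive integer $n$ with $S/Sa^{n}\cong l_{S}(a^{n})$ as left $S$-modules, and dually for \emph{right $\pi$-morphic}. It then remains to show that a local ring $S$ with nil Jacobson radical is both left and right $\pi$-morphic, which is precisely \cite[Lemma 2.11]{HC}. For completeness I would note the argument is a short case split: given $a\in S$, since $S$ is local either $a$ is a unit or $a\in J(S)$. If $a$ is a unit, then $Sa=S$ and $l_{S}(a)=0$, so $S/Sa\cong l_{S}(a)$ and $a$ is left morphic with $n=1$. If $a\in J(S)$, then $a$ is nilpotent because $J(S)$ is nil, say $a^{n}=0$; then $Sa^{n}=0=l_{S}(a^{n})$, so trivially $S/Sa^{n}\cong l_{S}(a^{n})$. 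Hence $S$ is left $\pi$-morphic, and the identical computation on the other side shows $S$ is right $\pi$-morphic.

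There is essentially no obstacle in this argument: the only substantive input is Theorem \ref{pimorphic}, and the passage from ``local with nil radical'' to ``left and right $\pi$-morphic'' is the routine dichotomy between units and nilpotents recorded in \cite[Lemma 2.11]{HC}. The single point I would double-check in writing is that the cited lemma is stated for arbitrary local rings with nil Jacobson radical (so that it directly applies to $S$), and, if not, simply insert the three-line case split above in place of the citation.
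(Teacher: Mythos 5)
Your proposal matches the paper's own (unwritten) argument exactly: the paper obtains this corollary from Theorem \ref{pimorphic} together with \cite[Lemma 2.11]{HC}, which is precisely your reduction, so the approach is the same and correct. The only blemish is in your optional case split: when $a^{n}=0$ you have $l_{S}(a^{n})=l_{S}(0)=S$ (not $0$) and $S/Sa^{n}=S/0=S$, so the required isomorphism $S/Sa^{n}\cong l_{S}(a^{n})$ still holds, but with both sides equal to $S$ rather than to $0$.
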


\noindent{\bf Acknowledgements.} The authors would like to thank
the referee(s) for valuable suggestions. The first author thanks
the Scientific and Technological Research Council of Turkey
(TUBITAK) for the financial support.

\end{document}